\newtheorem{theorem}{Theorem} [section]
\newtheorem{lemma}[theorem]{Lemma}
\newtheorem{corollary}[theorem]{Corollary}
\newtheorem{proposition}[theorem]{Proposition}
\newtheorem{remark}[theorem]{Remark}
\numberwithin{equation}{section}
\renewcommand{\i}{\mathbf{i}}
\renewcommand{\j}{\mathbf{j}}
\newcommand{\eps}{\epsilon}
\renewcommand{\v}{\mathbf{v}}
\newcommand{\nc}{\newcommand}
\nc{\browntext}[1]{\textcolor{brown}{#1}}
\nc{\greentext}[1]{\textcolor{green}{#1}}
\nc{\redtext}[1]{\textcolor{red}{#1}}
\nc{\bluetext}[1]{\textcolor{blue}{#1}}
\nc{\brown}[1]{\browntext{ #1}}
\nc{\green}[1]{\greentext{ #1}}
\nc{\red}[1]{\redtext{ #1}}
\nc{\blue}[1]{\bluetext{ #1}}
\title[Howe duality between quantum general linear supergroups]{Three approaches to the Howe duality between quantum general linear supergroups}
\author[Li Luo]{Li Luo}
\author[Xirui Yu]{Xirui Yu}
\address{School of Mathematical Sciences, Key Laboratory of MEA (Ministry of Education) \& Shanghai Key Laboratory of PMMP, East China Normal University, Shanghai 200241, China}
\email{lluo@math.ecnu.edu.cn (Luo), 51255500105@stu.ecnu.edu.cn (Yu)}
\author[Zhongguo Zhou]{Zhongguo Zhou}
\address{School of Mathematics, Hohai University, Nanjing 210098, China}
\email{zhgzhou@hhu.edu.cn (Zhou)}
\begin{document}

\begin{abstract} 
The Howe duality between quantum general linear supergroups was firstly established by Y. Zhang via quantum coordinate superalgebras. In this paper, we provide two other approaches to this Howe duality. One is constructed by quantum differential operators, while the other is based on the Beilinson-Lusztig-MacPherson realization of $U_q(\mathfrak{gl}_{m|n})$. Moreover, we show that these three approaches are equivalent by giving their action formulas explicitly. 
\end{abstract}

\maketitle

\section{Introduction}
\subsection{History and Goal}
The classical Howe duality, which involves commuting actions of a pair of Lie groups and Lie algebras, provides a representation-theoretic treatment for invariant theory. It has long been of interest, since its associated Howe correspondence plays a significant role in the theory of automorphic forms.

Among various generalizations, the first quantized Howe duality was established by Quesne \cite{Q92}, who provided a double centralizer property between $U_q(\mathfrak{su}_3)$ and $U_q(\mathfrak{u}_2)$. Noumi, Umeda and Wakayama \cite{NUW95,NUW96} obtained quantum analogues of the Howe dual pairs $(\mathfrak{sl}_n,\mathfrak{so}_n)$ and $(\mathfrak{sp}_2,\mathfrak{so}_n)$. 
Zhang \cite{Z03} employed quantum coordinate algebras to construct a non-commutative analogue of symmetric algebras, which accommodates a Howe duality between quantum groups $U_q(\mathfrak{gl}_m)$ and $U_q(\mathfrak{gl}_n)$. 
This formulation was also generalized in \cite{LZ03} to establish the Howe dualities of pairs $(U_q(\mathfrak{gl}_n),U_q(\mathfrak{so}_{2n}))$, $(U_q(\mathfrak{gl}_n),U_q(\mathfrak{so}_{2n+1}))$ and $(U_q(\mathfrak{gl}_n),U_q(\mathfrak{sp}_{2n}))$. In a geometric perspective, Baumann \cite{Ba99} provided a construction of $(U_q(\mathfrak{gl}_m), U_q(\mathfrak{gl}_n))$-duality through $\mathrm{GL}_d$-orbits on the product of an $m$-step and an $n$-step flag varieties; see also \cite{W01} by Wang for a non-quantum version via Lagrangian construction in a similar spirit. The equivalence between these algebraic and geometric constructions was shown in \cite{LX22}.

Baumann's geometric construction originates from Iwahori's seminal realization of the Hecke algebra (associated with an algebraic group $G$) as an involution algebra of $G$-invariant functions on the double complete flag variety (cf. \cite{IM65}). This framework underwent significant expansion in the landmark work \cite{BLM90} of Beilinson, Lusztig and MacPherson (abbr. BLM), who generalized Iwahori's construction to realize quantum Schur algebras of type $A$ by using $n$-step flags instead of complete flags. They derived a closed formula for the product of a simple generator and a basis element of Schur algebras and then established a stabilization property, by which the quantum group $U_q(\mathfrak{gl}_n)$ was realized. 

A super counterpart of the BLM realization was obtained in \cite{DG14} by Du and Gu. As an analogy, we shall generalize Baumann's construction to the super-setup in this paper. That is, to provide the Howe duality between quantum general linear supergroups along Du-Gu's way. Building on this inspiration, we may also utilize other methods to realize quantum general linear supergroups for constructing Howe duality between them. For instance, Du and Zhou \cite{DZ20} employed quantum differential operators to realize quantum general linear supergroups, likewise following which we will establish the Howe duality between quantum general linear supergroups, too. 
To be honest, such a super analogue of Howe duality has been established by Y. Zhang \cite{Zh20} via quantum coordinate superalgebras when he studied the fundamental theorems of invariant theory for quantum general linear supergroups. His formulation can be traced back to \cite{Z98}, and is an improvement of Wu and R.B. Zhang's achievement \cite{WZ09} on Howe duality for the pair $(\mathfrak{gl}_{m|n},\mathfrak{gl}_d)$. We shall also revisit Y. Zhang's work and prove the equivalence between these three different approaches. 

\subsection{Mail results}
In summary, we explicitly formulate Fock spaces that carry a left $U_\v(\mathfrak{gl}_{k|l})$-action and a right $U_\v(\mathfrak{gl}_{r|s})$-action through three approaches. 

The first is via a quantum coordinate superalgebra $\mathcal{M}^{k|l}_{r|s}$ generated by some quantum coordinate coefficients (see \S\ref{sec:M}). Although Y. Zhang has given this construction in \cite{Zh20}, we have made appropriate adjustments in \eqref{eq:act1} to better accommodate the left $U_\v(\mathfrak{gl}_{k|l})$-action and the right $U_\v(\mathfrak{gl}_{r|s})$-action on $\mathcal{M}^{k|l}_{r|s}$. In addition, we provide explicit formulas for the left and right actions in detail (see Proposition~\ref{coor}), which Y. Zhang did not specify.

The second is through a mixed tensor product of polynomial superalgebras in \eqref{eq:S} to produce $\mathcal{S}^{k|l}_{r|s}$, on which the left action of $U_\v(\mathfrak{gl}_{k|l})$ and the right action of $U_\v(\mathfrak{gl}_{r|s})$ are given by quantum differential operators. These action formulas are also explicitly listed in Proposition~\ref{prop:3.3}. 

The third one arises from the convolution algebra of function spaces over orbits in certain double flag variety. Although in the super case, this geometric interpretation cannot be directly applied, Du and Gu have algebraically described the relevant geometric objects using an ``even-odd trivial intersection'' condition. Building on Du-Gu's work, we construct an $(\mathbf{S}_{\v}(k|l,d),\mathbf{S}_{\v}(r|s,d))$-bimodule $\mathcal{V}^{k|l}_{r|s}(d)$ in \S\ref{sev:V}, where detailed action formulas for the generators of Schur superalgebras $\mathbf{S}_{\v}(k|l,d)$ and $\mathbf{S}_{\v}(r|s,d)$ are provided as well. Thanks to the surjectivity of $U_\v(\mathfrak{gl}_{m|n})\to\mathbf{S}_{\v}(m|n,d)$, we further give formulas of the actions of $U_\v(\mathfrak{gl}_{k|l})$ and $U_\v(\mathfrak{gl}_{r|s})$ on $\mathcal{V}^{k|l}_{r|s}=\bigoplus_{d\geq 0}\mathcal{V}^{k|l}_{r|s}(d)$ in Proposition~\ref{prop:4.11}.

Thanks to these explicit action formulas, we see that these three $(U_\v(\mathfrak{gl}_{k|l}), U_\v(\mathfrak{gl}_{r|s}))$-bimodules $\mathcal{M}^{k|l}_{r|s}$, $\mathcal{S}^{k|l}_{r|s}$ and $\mathcal{V}^{k|l}_{r|s}$ are isomorphic. Therefore, they all admit a Howe duality between $U_\v(\mathfrak{gl}_{k|l})$ and $U_\v(\mathfrak{gl}_{r|s})$, as it was shown in \cite{Zh20} that $\mathcal{M}^{k|l}_{r|s}$ does.

\subsection{Organization}
Here is a layout of the paper. In Section 2, we revisit the quantum coordinate superalgebras formulation for the super Howe duality. In Section 3 we provide its differential operator construction, while in Section 4 we give its geometric BLM realization. Our explicit action formulas show that these three approaches are equivalent.

\subsubsection*{Acknowledgement} We thank the referee for a helpful suggestion that allowed us to significantly shorten the proof of Proposition~\ref{left}.
The work is partially supported by the National Key R\&D Program of China (No. 2024YFA1013802), the NSF of China (No. 12371028) and the Science and Technology Commission of Shanghai Municipality (No. 22DZ2229014).

\section{Quantum coordinate superalgebra construction}\label{sec:2}
\subsection{Quantum general linear supergroup}
Given $m,n\in\mathbb{N}$, denote
$$I_{m|n}=\{1,2,\ldots,m+n\}.$$ Let $\mathbb{Z}_2=\{\overline{0},\overline{1}\}$ be the finite field with two elements.
For an integer $a\in I_{m|n}$, we define its parity by
\begin{equation*}
\widehat{a}:=\left\{
\begin{array}{ll}
\overline{0},& 1\leq a\leq m;\\
\overline{1},& m+1\leq a\leq m+n
\end{array}
\right.
\end{equation*}

Let $\v$ be an indeterminate.
For $a\in I_{m|n}$, denote
\begin{equation*}
\v_a:=\left\{\begin{array}{ll} 
 \v, & 1\leq a\leq m;\\
\v^{-1}, & m+1\leq a\leq m+n.
 \end{array}\right.
 \end{equation*}

The quantum general linear supergroup $U_{\v}(\mathfrak{gl}_{m|n})$ is a unital associative superalgebra over $\mathbb{C}(\v)$ generated by
\begin{itemize}
\item even generators $K_a, K^{-1}_a (a\in I_{m|n}), E_{b,b+1}, E_{b+1,b} (b\in I_{m|n}, b\neq m,m+n)$;
\item odd generators $E_{m,m+1}, E_{m+1,m}$,
\end{itemize}
subject to the relations:
\begin{itemize}
\item[(R1)] $K_a K^{-1}_a=K^{-1}_a K_a=1,\quad K_a K_b=K_b K_a$;
\item[(R2)] $K_a E_{b,b\pm1} K^{-1}_a=\v_a^{\delta_{a,b}-\delta_{a,b\pm1}}E_{b,b\pm1}$;
\item[(R3)] $[E_{a,a+1},E_{b+1,b}]=\delta_{ab}\frac{K_a K^{-1}_{a+1}-K^{-1}_a K_{a+1}}{\v_a-\v^{-1}_a}$;
\item[(R4)] $E_{a,a+1}E_{b,b+1}=E_{b,b+1}E_{a,a+1},\ E_{a+1,a}E_{b+1,b}=E_{b+1,b}E_{a+1,a},\quad (a-b\geq2)$;
\item[(R5)] $(E_{a,a+1})^2E_{b,b+1}-(\v+\v^{-1})E_{a,a+1}E_{b,b+1}E_{a,a+1}+E_{b,b+1}(E_{a,a+1})^2=0$,\\
$(E_{a+1,a})^2E_{b+1,b}-(\v+\v^{-1})E_{a+1,a}E_{b+1,b}E_{a+1,a}+E_{b+1,b}(E_{a+1,a})^2=0$,\\ $(|a-b|=1, a\neq m)$;
\item[(R6)] $(E_{m,m+1})^2=(E_{m+1,m})^2=[E_{m-1,m+2},E_{m,m+1}]=[E_{m+2,m-1},E_{m+1,m}]=0$,
\end{itemize}
where $E_{m-1,m+2}$ and $E_{m+2,m-1}$ are determined inductively by
\begin{equation*}
E_{ab}=\left\{
\begin{array}{ll}
E_{ac}E_{cb}-\v^{-1}_cE_{cb}E_{ac}, & a<c<b;\\
E_{ac}E_{cb}-\v_cE_{cb}E_{ac}, & b<c<a.
\end{array}
\right.
\end{equation*}
We also denote by $\widehat{\quad}$ the parity of a homogeneous element in $U_\v(\mathfrak{gl}_{m|n})$, e.g. $$\widehat{K_a}=\widehat{K_a^{-1}}=0\quad \mbox{and} \quad \widehat{E_{ab}}=\widehat{a}+\widehat{b}.$$ 

It is known that $U_\v(\mathfrak{gl}_{m|n})$ is a Hopf superalgebra, whose coproduct $\Delta$, counit $\eps$ and antipode $S$ are given as follows:
\begin{align*}
\Delta(K_{a}^{\pm1})=K_{a}^{\pm1}\otimes K_{a}^{\pm1},\quad
&\Delta(E_{a,a+1})=E_{a,a+1}\otimes K_{a}K_{a+1}^{-1}+1\otimes E_{a,a+1},\\
&\Delta(E_{a+1,a})=E_{a+1,a}\otimes1+K_{a}^{-1}K_{a+1}\otimes E_{a+1,a};
\end{align*}
\begin{align*}
\eps(K^{\pm}_a)=1,\quad 
\eps (E_{a,a+1})=\eps (E_{a+1,a})=0;
\end{align*}
\begin{align*}
S(K_{a}^{\pm1})=K_{a}^{\mp1},\quad
S(E_{a,a+1})=-E_{a,a+1}K^{-1}_{a}K_{a+1},\quad
S(E_{a+1,a})=-K_{a}K^{-1}_{a+1}E_{a+1,a}.
\end{align*}
Here ``super'' means that the antipode $S$ is a $\mathbb{Z}_2$-graded algebra anti-automorphism, i.e. for homogeneous $x$, $y\in U_\v(\mathfrak{gl}_{m|n})$, $S(xy)=(-1)^{\widehat{x}\widehat{y}}S(y)S(x)$. We shall use Sweedler's symbol $$\Delta(x)=\sum_{(x)}x_{(1)}\otimes x_{(2)}\quad\mbox{for}\quad x\in U_\v(\mathfrak{gl}_{m|n}).$$

For $k\leq m$ and $l\leq n$, let $$\widetilde{I}_{k|l}=\{m-k+1,\ldots,m,m+1,\ldots,m+l\}.$$ 
We always regard $U_\v(\mathfrak{gl}_{k|l})$ as the subsuperalgebra of $U_\v(\mathfrak{gl}_{m|n})$ generated by $K_a^{\pm1}$, $E_{b,b+1}$ and $E_{b+1,b}$, $(a,b\in\widetilde{I}_{k|l}, b\neq m+l)$.

\subsection{Quantum coordinate superalgebra} \label{sec:M}
Let $V$ be the natural module of $U_\v(\mathfrak{gl}_{m|n})$, where $V=V_{\overline{0}}\oplus V_{\overline{1}}$ is a superspace with a basis $\{v_a\mid a\in I_{m|n}\}$ such that $\hat{v}_a=\hat{a}$. The $U_\v(\mathfrak{gl}_{m|n})$-action on $V$ is given by   
$K_a v_{b}=\v_a^{\delta_{a,b}}v_b$ and $E_{a,a\pm1}v_b=\delta_{b,a\pm1}v_a$.

Let
\begin{equation*}
U_\v(\mathfrak{gl}_{m|n})^{\circ}:=\{f\in(U_\v(\mathfrak{gl}_{m|n}))^* \mid \mbox{$\mathrm{Ker} f$ contains a cofinite ideal of $U_\v(\mathfrak{gl}_{m|n})$}\}
\end{equation*}
be the finite dual of $U_\v(\mathfrak{gl}_{m|n})$, which is also a Hopf superalgebra.
We have the matrix elements $t_{ij}\in U_\v(\mathfrak{gl}_{m|n})^{\circ}, (i,j\in I_{m|n})$, defined by
\begin{equation*}
\langle t_{ij},x \rangle=x_{ij},\quad \forall x\in U_\v(\mathfrak{gl}_{m|n}),
\end{equation*}
where $\langle\cdot,\cdot\rangle$ is the dual space pairing and $x_{ij}$ is the $(i,j)$-th matrix entry of $x$ acting on $V$ associated with the basis $\{v_a \mid a\in I_{m|n}\}$. 
The parity of $t_{ij}$ is defined as $\hat{t}_{ij}=\hat{i}+\hat{j}$.

In \cite{Z98}, R.B. Zhang introduced a subsuperalgebra $\mathcal{M}_{m|n}$ of $U_\v(\mathfrak{gl}_{m|n})^\circ$, which is generated by $t_{ij}$, $(i,j\in I_{m|n})$ with multiplication as follows: for homogeneous elements $t, t'\in\mathcal{M}_{m|n}$ and $x\in U_\v(\mathfrak{gl}_{m|n})$,
\begin{equation*}
\langle tt',x \rangle=\sum_{(x)}\langle t\otimes t',x_{(1)}\otimes x_{(2)} \rangle=\sum_{(x)}(-1)^{\hat{t'}\hat{x}_{(1)}}\langle t,x_{(1)}\rangle \langle t',x_{(2)} \rangle.
\end{equation*} 

\begin{lemma}{\cite[Lemma 2.6]{Zh20}}
The superalgebra $\mathcal{M}_{m|n}$ is generated by the matrix elements $t_{ij}$ satisfies the following defining relations:
\begin{align*}
&(t_{ij})^2=0,&&\hat{i}+\hat{j}=\overline{1};\\
&t_{ia}t_{ja}=(-1)^{(\hat{i}+\hat{a})(\hat{j}+\hat{a})}\v_{a}t_{ja}t_{ia},&&i>j;\\
&t_{ai}t_{aj}=(-1)^{(\hat{a}+\hat{i})(\hat{a}+\hat{j})}\v_{a}t_{aj}t_{ai},&&i>j;\\
&t_{ia}t_{jb}=(-1)^{(\hat{i}+\hat{a})(\hat{j}+\hat{b})}t_{jb}t_{ia},&&i>j,a<b;\\
&t_{ia}t_{jb}=(-1)^{(\hat{i}+\hat{a})(\hat{j}+\hat{b})}t_{jb}t_{ia}+(-1)^{\hat{i}(\hat{j}+\hat{b})+\hat{j}\hat{b}}(\v-\v^{-1})t_{ja}t_{ib},&&i>j,a>b.
\end{align*}
\end{lemma}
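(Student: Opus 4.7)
Since $\mathcal{M}_{m|n}$ lives inside the finite dual $U_\v(\mathfrak{gl}_{m|n})^\circ$, an identity $F(t_{ij})=0$ among the generators is equivalent to the statement that the functional $F(t_{ij})$ vanishes on every $x\in U_\v(\mathfrak{gl}_{m|n})$. The plan is therefore to verify each of the five listed relations by pairing both sides against an arbitrary $x$ using the multiplication formula
\[
\langle t_{ij}t_{kl}, x\rangle \;=\; \sum_{(x)} (-1)^{(\hat{k}+\hat{l})\hat{x}_{(1)}}\, (x_{(1)})_{ij}(x_{(2)})_{kl},
\]
and then to argue completeness by a PBW / diamond-lemma argument. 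The right-hand side above is a Koszul-signed version of the $((i,k),(j,l))$-matrix entry of $\Delta(x)$ acting on $V\otimes V$ in the basis $\{v_a\otimes v_b\}$; consequently every quadratic identity among the $t_{ij}$'s is equivalent to a statement about how $\Delta(x)$ acts on this tensor product.

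\paragraph{Verifying the relations.} I would first compute the R-matrix $\check{R}\in\mathrm{End}(V\otimes V)$ intertwining $\Delta$ and $\Delta^{\mathrm{op}}$ on $V\otimes V$, using the coproduct formulas for $K_a^{\pm 1}$ and $E_{b,b\pm 1}$ listed earlier. Each of the five relations is then read off from the eigendecomposition of $\check{R}$: the nilpotency $(t_{ij})^2=0$ for $\hat{i}+\hat{j}=\overline{1}$ corresponds to the $(-1)$-eigenvectors of $\check{R}$ in the odd sector; the two column/row identities come from $\check{R}$ acting on the diagonal vectors $v_a\otimes v_a$; and the two mixed relations (for $i>j$ with $a<b$ versus $a>b$) reflect the two different eigenvalues of $\check{R}$ on off-diagonal basis vectors, with the ``extra'' term $(\v-\v^{-1})t_{ja}t_{ib}$ in the final relation coming from the strictly upper-triangular summand $(\v-\v^{-1})\sum_{i<j}E_{ji}\otimes E_{ij}$ of $\check{R}$. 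As an alternative to the R-matrix approach, one can simply substitute $x\in\{K_a^{\pm 1}, E_{b,b+1}, E_{b+1,b}\}$ into the multiplication formula, verify each of the five relations on generators, and extend to arbitrary $x\in U_\v(\mathfrak{gl}_{m|n})$ by using that both sides define functionals respecting the coproduct.

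\paragraph{Completeness and main obstacle.} Let $\widetilde{\mathcal{M}}_{m|n}$ denote the abstract superalgebra presented by the generators $t_{ij}$ subject to the five listed relations, so that the calculations above give a surjection $\widetilde{\mathcal{M}}_{m|n}\twoheadrightarrow\mathcal{M}_{m|n}$. Using the relations to push lexicographically smaller indices to the left, every monomial in $\widetilde{\mathcal{M}}_{m|n}$ rewrites as a linear combination of ``normally ordered'' monomials in which each odd generator appears at most once; a diamond-lemma check then produces an explicit spanning set of such monomials, and comparing multi-graded dimensions against the classical coordinate superring $\mathcal{O}(M_{m|n})$ via specialization at $\v=1$ establishes the surjection is an isomorphism. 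The main obstacle is the Koszul sign bookkeeping inherent to the super setting: the super coproduct, the graded antipode, and the parity of $\check{R}$ all contribute signs that must be tracked consistently in order to recover the relations exactly as stated; a secondary subtlety lies in the diamond-lemma argument itself, where the presence of nilpotent odd generators forces a careful choice of ordering to ensure the rewriting rules are confluent.
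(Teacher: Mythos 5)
The paper does not prove this lemma at all: it is imported verbatim from \cite[Lemma 2.6]{Zh20}, so there is no internal argument to compare against, and any self-contained proof you give is necessarily a ``different route'' from the paper's (which is a citation). Your sketch is essentially the standard FRT-type argument that the cited source itself relies on: the five quadratic relations are exactly the RTT relations, i.e.\ the matrix entries of the identity $\check{R}\,\Delta(x)=\Delta(x)\,\check{R}$ on $V\otimes V$ for the standard super $R$-matrix, and the intertwiner interpretation is precisely what justifies checking only on the generators $K_a^{\pm1}, E_{b,b\pm1}$ and then extending to all of $U_\v(\mathfrak{gl}_{m|n})$. That half of your plan is sound, modulo the Koszul-sign bookkeeping you already flag. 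The one genuine thin spot is the completeness step: the diamond lemma gives you that the normally ordered monomials \emph{span} the abstract algebra $\widetilde{\mathcal{M}}_{m|n}$, but to conclude that the surjection $\widetilde{\mathcal{M}}_{m|n}\twoheadrightarrow\mathcal{M}_{m|n}$ is injective you must also show that the images $t^{(A)}$ are \emph{linearly independent} inside $U_\v(\mathfrak{gl}_{m|n})^{\circ}$ (this is exactly the basis claim $\{t^{(A)}\mid A\in M(m|n)\}$ that the paper states right after the lemma). A specialization-at-$\v=1$ dimension count does not by itself deliver this for a subalgebra of the finite dual, since one would first need a flat $\mathbb{C}[\v,\v^{-1}]$-form of $\mathcal{M}_{m|n}$; the usual fix is to pair the monomials $t^{(A)}$ against a PBW basis of $U_\v(\mathfrak{gl}_{m|n})$ and verify the pairing matrix is triangular with nonzero diagonal. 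With that step supplied, your argument is complete and matches the method of the cited reference.
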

Moreover, the superalgebra $\mathcal{M}_{m|n}$ inherits the comultiplication $\Delta$ and the counit $\eps$ of $U_\v(\mathfrak{gl}_{m|n})^{\circ}$. Precisely,
\begin{equation*}
\Delta(t_{ij})=\sum_{a\in I_{m|n}}(-1)^{(\hat{i}+\hat{a})(\hat{a}+\hat{j})}t_{ia}\otimes t_{aj},\qquad \eps(t_{ij})=\delta_{ij}.
\end{equation*}

We denote by $M(m|n)$ the subset of $(m+n)\times (m+n)$ matrices with entries in $\mathbb{N}$. Each element in $M(m|n)$ is of the form
\begin{align*} 
A=&\begin{pmatrix} 
A_{11}&A_{12}\\
A_{21}&A_{22}
\end{pmatrix} \begin{array}{c} m\\n\end{array}\\ &\ \quad m \qquad n
\end{align*}
where the entries in blocks $A_{12}$ and $A_{21}$ are either $0$ or $1$. 

For $A=(a_{ij})\in M(m|n)$, let 
\begin{equation}\label{def:tA}
t^{(A)}=\prod_{1\leq i,j\leq m+n}^{<}t_{ij}^{a_{ij}},
\end{equation}
where ``$<$'' means that the product is arranged in such a way that $t_{ij}$ is positioned in front of $t_{kl}$ if $j<l$ or `$j=l, i<k$'. As a $\mathbb{C}(\v)$-space, $\mathcal{M}_{m|n}$ has a basis $\{t^{(A)}\mid A\in M(m|n)\}$.
For $k,r\leq m$ and $l,s\leq n$, let $\mathcal{M}^{k|l}_{r|s}$ be the subsuperalgebra of $\mathcal{M}_{m|n}$ generated by $\{t_{ij}\mid i\in\widetilde{I}_{k|l}, j\in\widetilde{I}_{r|s}\}$. It has a $\mathbb{C}(\v)$-basis $\{t^{(A)} \mid A\in M(k|l,r|s)\}$, where $M(k|l,r|s)$ is the subset of $M(m|n)$ consisting of the matrices whose $(i,j)$-th entry is $0$ unless $i\in\widetilde{I}_{k|l}$ and $ j\in\widetilde{I}_{r|s}$.
We call $\mathcal{M}_{m|n}$ and $\mathcal{M}^{k|l}_{r|s}$ quantum coordinate superalgebras.

\subsection{Howe duality}\label{sec:2.3}
It was initiated in \cite{Z98} and improved in \cite{Zh20} that there are left and right actions of $U_\v(\mathfrak{gl}_{m|n})$ on $\mathcal{M}_{m|n}$ as follows:
for $x\in U_\v(\mathfrak{gl}_{m|n})$ and $f\in\mathcal{M}_{m|n}$,
\begin{equation}\label{eq:act1}
x\cdot f:=\sum_{(f)}\langle f_{(1)},\omega(x)\rangle f_{(2)},
\qquad 
f\cdot x:=\sum_{(f)}f_{(1)}\langle f_{(2)},\omega(x) \rangle,
\end{equation}
where $\omega$ is the anti-involution of $U_\v(\mathfrak{gl}_{m|n})$ such that
\begin{equation}\label{anti}
K_i\mapsto K_i,\quad E_{h,h+1}\mapsto E_{h+1,h},\quad E_{h+1,h}\mapsto E_{h,h+1}.
\end{equation}
By restriction, these actions also make $\mathcal{M}^{k|l}_{r|s}$ a left $U_\v(\mathfrak{gl}_{k|l})$-module and a right $U_\v(\mathfrak{gl}_{r|s})$-module.

For example, for $h,i,j\in I_{m|n}$, we have
\begin{equation}\label{kl}
\begin{aligned}
K_{h}^{\pm1}\cdot t_{ij}=&\sum_{k\in I_{m|n}}(-1)^{(\hat{i}+\hat{k})(\hat{k}+\hat{j})}\langle t_{ik},K_{h}^{\pm1}\rangle t_{kj}=\langle t_{ii},K_{h}^{\pm1}\rangle t_{ij}=\v_{h}^{\pm\delta_{ih}}t_{ij},
\end{aligned}
\end{equation}
\begin{equation}\label{el}
\begin{aligned}
E_{m,m+1}\cdot t_{ij}=&\sum_{k\in I_{m|n}}(-1)^{(\hat{i}+\hat{k})(\hat{k}+\hat{j})}\langle t_{ik},E_{m+1,m}\rangle t_{kj}\\
=&(-1)^{(\hat{i}+\widehat{i-1})(\widehat{i-1}+\hat{j})}\langle t_{i,i-1},E_{m+1,m}\rangle t_{i-1,j}=\delta_{i,m+1}(-1)^{\hat{j}}t_{i-1,j},
\end{aligned}
\end{equation}
\begin{equation}\label{kr}
\begin{aligned}
t_{ij}\cdot K_{h}^{\pm1}=&\sum_{k\in I_{m|n}}(-1)^{(\hat{i}+\hat{k})(\hat{k}+\hat{j})}t_{ik}\langle t_{kj},K_{h}^{\pm1}\rangle=t_{ij}\langle t_{jj},K_{h}^{\pm1}\rangle=\v_{h}^{\pm\delta_{jh}}t_{ij},
\end{aligned}
\end{equation}
\begin{equation}\label{er}
\begin{aligned}
t_{ij}\cdot E_{m+1,m}=&\sum_{k\in I_{m|n}}(-1)^{(\hat{i}+\hat{k})(\hat{k}+\hat{j})}t_{ik}\langle t_{kj},E_{m,m+1}\rangle \\ 
=&(-1)^{(\hat{i}+\widehat{j-1})(\widehat{j-1}+\hat{j})}t_{i,j-1}\langle t_{j-1,j},E_{m,m+1}\rangle=\delta_{j,m+1}(-1)^{\hat{i}}t_{i,j-1}.
\end{aligned}
\end{equation}

\begin{remark}
In fact, the `$\mathcal{R}$-action' in \cite{Zh20} is a left action. We adjust it to a right action as above.  
\end{remark}

\begin{theorem}[Howe duality \cite{Zh20}]\label{Howe} 
The left $U_\v(\mathfrak{gl}_{k|l})$-action and the right $U_\v(\mathfrak{gl}_{r|s})$-action on $\mathcal{M}^{k|l}_{r|s}$ admit a double centralizer property. Hence, $\mathcal{M}^{k|l}_{r|s}$ has a multiplicity-free decomposition:
\begin{equation*}
\mathcal{M}^{k|l}_{r|s}\simeq\bigoplus_{\lambda\in\tilde{\Lambda}(k|l)\cup\tilde{\Lambda}(r|s)}L^{k|l}_{\lambda}\otimes \widetilde{L}^{r|s}_{\lambda},
\end{equation*}
where $L^{k|l}_{\lambda}$ (resp. $\widetilde{L}^{r|s}_{\lambda}$) is the irreducible left $U_\v(\mathfrak{gl}_{k|l})$-module (resp. right $U_\v(\mathfrak{gl}_{r|s})$-module) with the highest weight $\lambda$.
\end{theorem}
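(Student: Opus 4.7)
The plan is to derive this Howe duality as a consequence of quantum super Schur--Weyl duality, applied piece by piece to the natural degree grading on $\mathcal{M}^{k|l}_{r|s}$. First I would verify that the left $U_\v(\mathfrak{gl}_{k|l})$-action and the right $U_\v(\mathfrak{gl}_{r|s})$-action commute. This is essentially formal: both actions in \eqref{eq:act1} are built from the coproduct on $\mathcal{M}_{m|n}$ composed with evaluation against $\omega(x)$, so coassociativity of $\Delta$, combined with the Hopf-superalgebra sign conventions, forces the commutation after a direct computation.

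Next I would exploit the grading $\mathcal{M}^{k|l}_{r|s} = \bigoplus_{d \geq 0} \mathcal{M}^{k|l}_{r|s}(d)$, where $\mathcal{M}^{k|l}_{r|s}(d)$ is spanned by those basis elements $t^{(A)}$ with $|A|=d$. Each graded piece is finite-dimensional and preserved by both actions, reducing the problem to decomposing each $\mathcal{M}^{k|l}_{r|s}(d)$ as a bimodule. The key structural step is to establish a bimodule isomorphism
\[
\mathcal{M}^{k|l}_{r|s}(d) \;\simeq\; \mathrm{Sym}^{d}_{\v}\bigl(V_{k|l}\otimes V_{r|s}\bigr),
\]
where $V_{k|l}$ and $V_{r|s}$ denote the respective natural modules and the right-hand side is the quantum symmetric component inside $(V_{k|l}\otimes V_{r|s})^{\otimes d}$, cut out by the trivial-representation idempotent of the Hecke algebra $\mathcal{H}_d$. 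The defining quadratic relations in the lemma above should, after sign bookkeeping, correspond precisely to the q-symmetrization relations.

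With this identification in place, I would invoke the quantum super Schur--Weyl dualities between $U_\v(\mathfrak{gl}_{k|l})$ and $\mathcal{H}_d$ on $(V_{k|l})^{\otimes d}$, and independently between $U_\v(\mathfrak{gl}_{r|s})$ and $\mathcal{H}_d$ on $(V_{r|s})^{\otimes d}$. Matching the $\mathcal{H}_d$-isotypic components from the two sides through the symmetric idempotent pairs an irreducible $L^{k|l}_\lambda$ with a unique $\widetilde{L}^{r|s}_\lambda$ via a common Hecke cell module $S^\lambda$, yielding the claimed multiplicity-free decomposition. Since every isotypic component then appears with multiplicity one, the double centralizer property follows at once.

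The principal obstacle I foresee is establishing the bimodule isomorphism rigorously in the super setting: the $\mathbb{Z}_2$-grading signs interact nontrivially with the $\v$-factors in the commutation relations satisfied by the $t_{ij}$, and it takes real care to verify that every quantum symmetry relation on $V_{k|l}\otimes V_{r|s}$ is already imposed by the relations of the lemma rather than merely a consequence of them. A practical way around this is to compare graded characters on the two sides, or equivalently to produce matching PBW-type bases and transfer the bimodule structure across the resulting linear isomorphism.
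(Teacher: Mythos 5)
The paper does not actually prove this theorem: it is quoted verbatim from Zhang's work \cite{Zh20}, where it is established through the structure of the braided (quantum) coordinate superalgebra, and the present paper's contribution is to transport the duality to the other two models via the explicit action formulas. So your proposal should be judged as an independent proof attempt, and it contains a genuine gap at its central step. The preliminary parts are fine: the commutativity of the two actions in \eqref{eq:act1} does follow formally from coassociativity of $\Delta$ on $\mathcal{M}_{m|n}$, and the reduction to the finite-dimensional graded pieces $\mathcal{M}^{k|l}_{r|s}(d)$ is harmless. The problem is the seesaw argument you build on top of the identification $\mathcal{M}^{k|l}_{r|s}(d)\simeq \mathrm{Sym}^d_{\v}(V_{k|l}\otimes V_{r|s})$. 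You want a single Hecke algebra $\mathcal{H}_d$ acting on $(V_{k|l}\otimes V_{r|s})^{\otimes d}$, commuting with both quantum supergroup actions, whose trivial idempotent cuts out the symmetric component and whose isotypic decomposition pairs $L^{k|l}_\lambda$ with $\widetilde L^{r|s}_\lambda$. No such action exists in the naive form: the diagonal braiding operator on $(V_{k|l}\boxtimes V_{r|s})^{\otimes 2}$ is the product of two operators each satisfying a quadratic Hecke relation with eigenvalues $\{\v,-\v^{-1}\}$ (up to normalization), so the product has the three distinct eigenvalues $\v^{2}$, $-1$, $\v^{-2}$ and satisfies no quadratic relation. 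Equivalently, the classical argument pairs $S^\lambda\otimes S^\mu$ by restricting to the diagonal copy of $S_d$ in $S_d\times S_d$ and extracting trivial-isotypic vectors; but $\mathcal{H}_d$ is not a Hopf algebra, there is no diagonal copy of $\mathcal{H}_d$ inside $\mathcal{H}_d\otimes\mathcal{H}_d$, and this is exactly the obstruction that forces the quantum Howe dualities in \cite{Z03,Zh20} to be proved through braided symmetric algebras (or, in \cite{Ba99,LX22}, through convolution algebras on double flag varieties) rather than by quantizing the Schur--Weyl seesaw.

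You partially anticipate trouble, but you locate it in the wrong place: you describe the risk as sign bookkeeping in matching the relations of the $t_{ij}$ with $q$-symmetrization relations, whereas the defect is structural and precedes any sign analysis. If you want a self-contained proof along representation-theoretic lines, the workable substitutes are: (i) Zhang's route, showing that $\mathcal{M}^{k|l}_{r|s}(d)$ is a quotient of a tensor power of the natural module on each side and running a highest-weight/character argument to get the Cauchy-type decomposition, with the double centralizer property then following from multiplicity-freeness; or (ii) the route this paper actually takes for the other two models, namely exhibiting an explicit bimodule isomorphism onto an object for which the duality is already known. As written, the step ``matching the $\mathcal{H}_d$-isotypic components from the two sides through the symmetric idempotent'' cannot be carried out.
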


In order to describe the following proposition concisely, we denote 
\begin{equation*}
\tilde{a}_{ij}=\left\{
\begin{array}{ll}
a_{ij}, & \mbox{if $\hat{i}\neq\hat{j}$}\\
0, & \mbox{if $\hat{i}=\hat{j}$}
\end{array}\right.
\quad\mbox{for~} A=(a_{ij})\in M(m|n),
\end{equation*}
and introduce the quantum integer
$[a]=\frac{\v^{a}-\v^{-a}}{\v-\v^{-1}}$ for $a\in\mathbb{N}$.
\begin{proposition}\label{coor'}
Let $A=(a_{ij})\in M(k|l,r|s)$. The left $U_\v(\mathfrak{gl}_{k|l})$-action on $\mathcal{M}^{k|l}_{r|s}$ is given by
\begin{align*}
\mathop{E_{h,h+1}\cdot t^{(A)}}\limits_{(h\neq m)}&=\sum_{{j\in \tilde{I}_{s|r}}}\v_{h}^{\sum_{y>j}(a_{hy}-a_{h+1,y})}[a_{h+1,j}]t^{(A+e_{hj}-e_{h+1,j})},\\
E_{m,m+1}\cdot t^{(A)}&=\sum_{{j\in \tilde{I}_{s|r}}}(-1)^{\sum_{(x,y)<(m+1,j)}\tilde{a}_{xy}+\hat{j}}\v_{m}^{\sum_{y<j}(a_{h+1,y}-a_{h,y})}[a_{m+1,j}]t^{(A+e_{mj}-e_{m+1,j})},
\\
\mathop{E_{h+1,h}\cdot t^{(A)}}\limits_{(h\neq m)}&=\sum_{{j\in \tilde{I}_{s|r}}}\v_{h+1}^{\sum_{y<j}(a_{h+1,y}-a_{h,y})}[a_{hj}]t^{(A-e_{hj}+e_{h+1,j})},\\
E_{m+1,m}\cdot t^{(A)}&=\sum_{{j\in \tilde{I}_{s|r}}}(-1)^{\sum_{(x,y)<(m,j)}\tilde{a}_{xy}+\hat{j}+1}\v_{m+1}^{\sum_{y>j}(a_{m+1,y}+a_{my})}[a_{mj}]t^{(A-e_{mj}+e_{m+1,j})}.
\end{align*}
The right $U_\v(\mathfrak{gl}_{r|s})$-action on $\mathcal{M}^{k|l}_{r|s}$ is given by
\begin{align*}
\mathop{t^{(A)}\cdot E_{h,h+1}}\limits_{(h\neq m)}
=&\sum_{{i\in \tilde{I}_{k|l}}}(-1)^{(\hat{i}+\widehat{h+1})(\sum_{x>i}\tilde{a}_{xh}+\sum_{x<i}\tilde{a}_{x,h+1})}\v_{h}^{\sum_{x\geqslant i}(a_{xh}-a_{x,h+1})-1}[a_{ih}]t^{(A-e_{ih}+e_{i,h+1})},\\
t^{(A)}\cdot E_{m,m+1}
=&\sum_{{i\in \tilde{I}_{k|l}}}(-1)^{\sum_{(x,y)>(i,m)}\tilde{a}_{xy}+(\hat{i}+1)(\sum_{x>i}\tilde{a}_{xm}+\sum_{x<i}\tilde{a}_{x,m+1}+1)}\\
&\qquad \times \v_{m}^{\sum_{x\geqslant i}(a_{x,m+1}+a_{xm})-1}[a_{im}]t^{(A-e_{im}+e_{i,m+1})},\\
\mathop{t^{(A)}\cdot E_{h+1,h}}\limits_{(h\neq m)}
=&\sum_{{i\in \tilde{I}_{k|l}}}(-1)^{(\hat{i}+\hat{h})(\sum_{x>i}\tilde{a}_{xh}+\sum_{x<i}\tilde{a}_{x,h+1})}\v_{h+1}^{\sum_{x\leqslant i}(a_{x,h+1}-a_{xh})-1}[a_{i,h+1}]t^{(A+e_{ih}-e_{i,h+1})},\\
t^{(A)}\cdot E_{m+1,m}
=&\sum_{{i\in \tilde{I}_{k|l}}}(-1)^{\sum_{(x,y)>(i,m+1)}\tilde{a}_{xy}+\hat{i}(\sum_{x>i}\tilde{a}_{xm}+\sum_{x<i}\tilde{a}_{x,m+1}+1)}\\
&\qquad \times \v_{m+1}^{\sum_{x\leqslant i}(a_{x,m+1}+a_{xm})-1}[a_{i,m+1}]t^{(A+e_{im}-e_{i,m+1})}.
\end{align*}
\end{proposition}

\begin{proof}
Here, we present the computation only for the most two complicated cases that $h=m$. Firstly, let us calculate the left action of $E_{m,m+1}$. A straightforward calculation in (\ref{kl}) and (\ref{el}) further implies
\begin{align*}
E_{m,m+1}\cdot t_{ij}^{d}
=&\sum_{c=1}^{d}(-1)^{(c-1)(\hat{i}+\hat{j})+\hat{j}}t_{ij}^{c-1}(E_{m,m+1}\cdot t_{ij})(K_{m}K_{m+1}^{-1}\cdot t_{ij})^{d-c}\\
=&\delta_{i,m+1}\sum_{c=1}^{d}(-1)^{c(1+\hat{j})+1}\v_{m+1}^{c-d}\v_{j}^{c-1}t_{i-1,j}t_{ij}^{d-1}
=\delta_{i,m+1}(-1)^{\hat{j}}[d]t_{i-1,j}t_{ij}^{d-1}.
\end{align*}
Hence,
\begin{align*}
E_{m,m+1}\cdot t^{(A)}
=&\sum_{{j\in \tilde{I}_{s|r}}}(-1)^{\sum_{(x,y)<(m+1,j)}\tilde{a}_{xy}}(\prod\limits_{(x,y)<(m+1,j)}^{<}t_{xy}^{a_{xy}})(E_{m,m+1}\cdot t_{m+1,j}^{a_{m+1,j}})\\
&\times (\prod\limits_{(x,y)>(m+1,j)}^{<}(K_{m}K_{m+1}^{-1}\cdot t_{xy})^{a_{xy}})\\
=&\sum_{{j\in \tilde{I}_{s|r}}}(-1)^{\sum_{(x,y)>(m+1,j)}\tilde{a}_{xy}+\hat{j}}\v_{m}^{\sum_{y<j}(a_{h+1,y}-a_{h,y})}[a_{m+1,j}]t^{(A+e_{mj}-e_{m+1,j})}.
\end{align*}

Next, let us calculate the right action of $E_{m+1,m}$. Direct computation in (\ref{kr}) and (\ref{er}) further implies
\begin{align*}
t_{ij}^d\cdot E_{m+1,m}
=&\sum_{c=1}^{d}(-1)^{(d-c)(\hat{i}+\hat{j})}(t_{ij}\cdot K_{m}^{-1}K_{m+1})^{c-1}(t_{ij}\cdot E_{m+1,m})t_{ij}^{d-c}\\
=&\delta_{j,m+1}\sum_{c=1}^{d}(-1)^{(d-c)(1+\hat{i})+\hat{i}}\v_{m+1}^{c-1}\v_{i}^{c-1}t_{i,j-1}t_{ij}^{d-1}\\
=&\delta_{j,m+1}(-1)^{\hat{i}}\v_{m+1}^{d-1}[d]t_{i-1,j}t_{ij}^{d-1}.
\end{align*}
Hence,
\begin{align*}
t^{(A)}\cdot E_{m+1,m}
=&\sum_{{i\in \tilde{I}_{k|l}}}(-1)^{\sum_{(x,y)>(i,m+1)}\tilde{a}_{xy}}(\prod\limits_{(x,y)<(i,m+1)}^{<}(t_{xy} \cdot K_{m}^{-1}K_{m+1})^{a_{xy}})\\
&\times(t_{i,m+1}^{a_{i,m+1}}\cdot E_{m,m+1})(\prod\limits_{(x,y)>(i,m+1)}^{<}t_{xy}^{a_{xy}})\\
=&\sum_{{i\in \tilde{I}_{k|l}}}(-1)^{\sum_{(x,y)>(i,m+1)}\tilde{a}_{xy}+\hat{i}(\sum_{x>i}\tilde{a}_{xm}+\sum_{x<i}\tilde{a}_{x,m+1}+1)}\\
&\times \v_{m+1}^{\sum_{x\leqslant i}(a_{x,m+1}+a_{xm})-1}[a_{i,m+1}]t^{(A+e_{im}-e_{i,m+1})}.
\end{align*}

The arguments for other cases are similar. 
\end{proof}

For $A=(a_{ij})\in M(m|n)$, let
\begin{equation*}
t^{\{A\}}=\v_j^{-\sum_{j\in I_{m|n}}\frac{\sum_{i}a_{ij}(\sum_{i}a_{ij}+1)}{2}}t^{(A)}.   
\end{equation*}
It is clear that $\{t^{\{A\}} \mid A\in M(k|l,r|s)\}$ is also a $\mathbb{C}(\v)$-basis of $\mathcal{M}^{k|l}_{r|s}$.

Using $t^{\{A\}}$ instead of $t^{(A)}$, we rewrite Propositions~\ref{coor'} as follows.

\begin{proposition}\label{coor}
Let $A=(a_{ij})\in M(k|l,r|s)$. The left $U_\v(\mathfrak{gl}_{k|l})$-action on $\mathcal{M}^{k|l}_{r|s}$ is given by
\begin{align*}
\mathop{E_{h,h+1}\cdot t^{\{A\}}}\limits_{(h\neq m)}&=\sum_{{j\in \tilde{I}_{s|r}}}\v_{h}^{\sum_{y>j}(a_{hy}-a_{h+1,y})}[a_{h+1,j}]t^{\{A+e_{hj}-e_{h+1,j}\}},\\
E_{m,m+1}\cdot t^{\{A\}}&=\sum_{{j\in \tilde{I}_{s|r}}}(-1)^{\sum_{(x,y)<(m+1,j)}\tilde{a}_{xy}+\hat{j}}\v_{m}^{\sum_{y<j}(a_{h+1,y}-a_{h,y})}[a_{m+1,j}]t^{\{A+e_{mj}-e_{m+1,j}\}},
\\
\mathop{E_{h+1,h}\cdot t^{\{A\}}}\limits_{(h\neq m)}&=\sum_{{j\in \tilde{I}_{s|r}}}\v_{h+1}^{\sum_{y<j}(a_{h+1,y}-a_{h,y})}[a_{hj}]t^{\{A-e_{hj}+e_{h+1,j}\}},\\
E_{m+1,m}\cdot t^{\{A\}}&=\sum_{{j\in \tilde{I}_{s|r}}}(-1)^{\sum_{(x,y)<(m,j)}\tilde{a}_{xy}+\hat{j}+1}\v_{m+1}^{\sum_{y>j}(a_{m+1,y}+a_{my})}[a_{mj}]t^{\{A-e_{mj}+e_{m+1,j}\}}.
\end{align*}
The right $U_\v(\mathfrak{gl}_{r|s})$-action on $\mathcal{M}^{k|l}_{r|s}$ is given by
\begin{align*}
\mathop{t^{\{A\}}\cdot E_{h,h+1}}\limits_{(h\neq m)}
=&\sum_{{i\in \tilde{I}_{k|l}}}(-1)^{(\hat{i}+\widehat{h+1})(\sum_{x>i}\tilde{a}_{xh}+\sum_{x<i}\tilde{a}_{x,h+1})}\v_{h}^{\sum_{x<i}(a_{x,h+1}-a_{xh})}[a_{ih}]t^{\{A-e_{ih}+e_{i,h+1}\}},\\
t^{\{A\}}\cdot E_{m,m+1}
=&\sum_{{i\in \tilde{I}_{k|l}}}(-1)^{\sum_{(x,y)>(i,m)}\tilde{a}_{xy}+(\hat{i}+1)(\sum_{x>i}\tilde{a}_{xm}+\sum_{x<i}\tilde{a}_{x,m+1}+1)}\\
&\times \v_{m}^{\sum_{x\geqslant i}(a_{xm}+a_{x,m+1})}[a_{im}]t^{\{A-e_{im}+e_{i,m+1}\}},\\
\mathop{t^{\{A\}}\cdot E_{h+1,h}}\limits_{(h\neq m)}
=&\sum_{{i\in \tilde{I}_{k|l}}}(-1)^{(\hat{i}+\hat{h})(\sum_{x>i}\tilde{a}_{xh}+\sum_{x<i}\tilde{a}_{x,h+1})}\v_{h+1}^{\sum_{x>i}(a_{xh}-a_{x,h+1})}[a_{i,h+1}]t^{\{A+e_{ih}-e_{i,h+1}\}},\\
t^{\{A\}}\cdot E_{m+1,m}
=&\sum_{{i\in \tilde{I}_{k|l}}}(-1)^{\sum_{(x,y)>(i,m+1)}\tilde{a}_{xy}+\hat{i}(\sum_{x>i}\tilde{a}_{xm}+\sum_{x<i}\tilde{a}_{x,m+1}+1)}\\
&\times \v_{m+1}^{\sum_{x\leqslant i}(a_{xm}+a_{x,m+1})}[a_{i,m+1}]t^{\{A+e_{im}-e_{i,m+1}\}}.
\end{align*}
\end{proposition}

\section{Quantum differential operators construction}

\subsection{Polynomial superalgebra}
Recall that $V=V_0\oplus V_1$ is the natural module of $U_\v(\mathfrak{gl}_{m|n})$, where $\dim V_0=m$ and $\dim V_1=n$. We have two polynomial superalgebras 
\begin{align*}
S_{m|n}:=S(V_0)\otimes \Lambda(V_1)&=\mathbb{C}(\v)[X_1,X_2,\ldots,X_{m+n}]\quad \mbox{with}\ X_i=x_i,\ X_{m+j}=\xi_j,\\
\Lambda_{m|n}:=\Lambda(V_0)\otimes S(V_1)&=\mathbb{C}(\v)[X_1,X_2,\ldots,X_{m+n}]\quad \mbox{with } X_i=\xi_i,\ X_{m+j}=x_j,
\end{align*}
where $1\leq i\leq m$ and $1\leq j\leq n$. Here, the even generators $x_i$'s are bosonic variables and the odd generators $\xi_i$'s are fermionic variables, i.e.
$$x_ix_j=x_jx_i \quad \mbox{and}\quad \xi_i\xi_j=-\xi_j\xi_i.$$ Particularly, $\xi_i^2=0$.
We use divided powers to denote their monomial bases
\begin{equation*}
X^{(\mathbf{a})}=X_1^{(a_1)}X_2^{(a_2)}\cdots X_{m+n}^{(a_{m+n})},
\end{equation*}
where $\mathbf{a}=(a_1,\ldots,a_{m+n})\in\mathbb{N}^m\times\mathbb{Z}_2^n$ for $S_{m|n}$, $\mathbf{a}=(a_1,\ldots,a_{m+n})\in\mathbb{Z}_2^m\times\mathbb{N}^n$ for $\Lambda_{m|n}$, and $X_i^{(a_i)}=\frac{X_i^{a_i}}{[a_i][a_i-1]\cdots[1]}$. The parity is defined as $\hat{\mathbf{a}}\equiv \sum_{i=1}^na_{m+i} ~ (\mathrm{mod}~2)$ for $X^{(\mathbf{a})}\in S_{m|n}$ and $\hat{\mathbf{a}}\equiv \sum_{i=1}^ma_{i} ~ (\mathrm{mod}~2)$ for $X^{(\mathbf{a})}\in \Lambda_{m|n}$.

We define the left actions of $U_\v(\mathfrak{gl}_{m|n})$ on $S_{m|n}$ and $\Lambda_{m|n}$, in the sense of quantum differential operators (cf. \cite{H90, DZ20}), saying that on $S_{m|n}$,
\begin{align*}
&K_i\cdot X^{(\mathbf{a})}=\v_i^{a_i}X^{(\mathbf{a})};\\
&E_{h,h+1}\cdot X^{(\mathbf{a})}=
\left\{
\begin{aligned}
&[a_h+1]X^{(\mathbf{a}+\mathbf{e}_h-\mathbf{e}_{h+1})},&&a_{h+1}>0,\\
&0,&&\text{otherwise};\\
\end{aligned}
\right.\\
&E_{h+1,h}\cdot X^{(\mathbf{a})}=
\left\{
\begin{aligned}
&[a_{h+1}+1]X^{(\mathbf{a}-\mathbf{e}_h+\mathbf{e}_{h+1})},&&a_h>0,\\
&0,&&\text{otherwise},\\
\end{aligned}
\right.
\end{align*}
and on $\Lambda_{m|n}$,
\begin{align*}
&K_i\cdot X^{(\mathbf{a})}=\v_i^{a_i}X^{(\mathbf{a})};\\
&E_{h,h+1}\cdot X^{(\mathbf{a})}=
\left\{
\begin{aligned}
&(-1)^{\delta_{hm}\hat{\mathbf{a}}}[a_h+1]X^{(\mathbf{a}+\mathbf{e}_h-\mathbf{e}_{h+1})},&&a_{h+1}>0,\\
&0,&&\text{otherwise};\\
\end{aligned}
\right.\\
&E_{h+1,h}\cdot X^{(\mathbf{a})}=
\left\{
\begin{aligned}
&(-1)^{\delta_{hm}\hat{\mathbf{a}}}[a_{h+1}+1]X^{(\mathbf{a}-\mathbf{e}_h+\mathbf{e}_{h+1})},&&a_h>0,\\
&0,&&\text{otherwise},\\
\end{aligned}
\right.
\end{align*}
where $1\leq i\leq m+n$ and $1\leq h<m+n$.

Consider the tensor product
\begin{equation*}
\mathcal{S}^{m|n}_{m|n}=(S_{m|n})^{\otimes m}\otimes(\Lambda_{m|n})^{\otimes n}\simeq\mathbb{C}(\v)[X_{ij}]_{1\leq i,j\leq m+n},
\end{equation*}
where $X_{ij}$ denotes the $i$-th generator of the $j$-th tensor factor. It is a polynomial superalgebra with even generators $X_{ij}$ $(\hat{i}=\hat{j})$ and odd generators $X_{ij}$ $(\hat{i}\neq\hat{j})$. It has a monomial basis
$\{X^{(A)}\mid A\in M(m|n)\}$ with
\begin{equation*}
X^{(A)}:=(-1)^{h(A)}X_{11}^{(a_{11})}X_{21}^{(a_{21})}\cdots X_{m+n,1}^{(a_{m+n,1})}\cdots X_{1,m+n}^{(a_{1,m+n})}X_{2,m+n}^{(a_{2,m+n})}\cdots X_{m+n,m+n}^{(a_{m+n,m+n})}
\end{equation*} for $A=(a_{ij})$, where we add a coefficient $h(A):=\sum_{i,j\in I_{m|n}}\frac{\tilde{a}_{ij}(\tilde{a}_{ij}+1)}{2}$ for later convenience in comparing multiplication formulas. 

The left actions of $U_\v(\mathfrak{gl}_{m|n})$ on $S_{m|n}$ and $\Lambda_{m|n}$ give a left action of $U_\v(\mathfrak{gl}_{m|n})$ on the tensor product $\mathcal{S}^{m|n}_{m|n}$. Then, by symmetry between rows and columns, we define a right action of $U_\v(\mathfrak{gl}_{m|n})$ on $\mathcal{S}^{m|n}_{m|n}$. This right action is given by the same structure coefficients as the left action, but applied to the transformed data: the monomial in $\mathcal{S}^{m|n}_{m|n}$ is reversed in order and has its indices transposed (i.e., replacing $(i,j)$ with $(j,i)$), while $x\in U_\v(\mathfrak{gl}_{m|n})$ is replaced by $\omega(x)$, where $\omega$ is the anti-involution of $U_\v(\mathfrak{gl}_{m|n})$ given in \eqref{anti}. 
Explicitly, the right action 
$$X_{m+n,m+n}^{(a_{m+n,m+n})}\cdots X_{m+n,2}^{(a_{m+n,2})}X_{m+n,1}^{(a_{m+n,1})}\cdots X_{1,m+n}^{(a_{1,m+n})}\cdots X_{12}^{(a_{12})}X_{11}^{(a_{11})}\cdot x$$ is understood via the left action $$\omega(x)\cdot X_{11}^{(a_{11})}X_{21}^{(a_{21})}\cdots X_{m+n,1}^{(a_{m+n,1})}\cdots X_{1,m+n}^{(a_{1,m+n})}X_{2,m+n}^{(a_{2,m+n})}\cdots X_{m+n,m+n}^{(a_{m+n,m+n})}.$$
A straightforward calculation shows that for $A=(a_{ij})\in M(m|n)$, 
\begin{align}\label{eq:3.1}
&X_{11}^{(a_{11})}X_{21}^{(a_{21})}\cdots X_{m+n,1}^{(a_{m+n,1})}\cdots X_{1,m+n}^{(a_{1,m+n})}X_{2,m+n}^{(a_{2,m+n})}\cdots X_{m+n,m+n}^{(a_{m+n,m+n})}\\\nonumber
=&(-1)^{s(A)}X_{m+n,m+n}^{(a_{m+n,m+n})}\cdots X_{2,m+n}^{(a_{2,m+n})}X_{1,m+n}^{(a_{1,m+n})}\cdots X_{m+n,1}^{(a_{m+n,1})}\cdots X_{21}^{(a_{21})}X_{11}^{(a_{11})}\\\nonumber
=&(-1)^{s(A)+\gamma(A)}X_{m+n,m+n}^{(a_{m+n,m+n})}\cdots X_{m+n,2}^{(a_{m+n,2})}X_{m+n,1}^{(a_{m+n,1})}\cdots X_{1,m+n}^{(a_{1,m+n})}\cdots X_{12}^{(a_{12})}X_{11}^{(a_{11})},
\end{align}
where 
$\gamma(A)=\sum_{i>k,j<l}\tilde{a}_{ij}\tilde{a}_{kl}$ and $s(A)=\sum\limits_{(i,j)<(k,l)}\tilde{a}_{ij}\tilde{a}_{kl}$. Here $(i,j)<(k,l)$ means $j<l$ or `$j=l, i<k$', as we used in \eqref{def:tA}.

\subsection{Key action formulas}

The formulas for the left action were given by Du and the third author in the following proposition.
\begin{proposition}{\cite[Lemma 3.3]{DZ20}} \label{prop:3.1}
The left action of $U_\v(\mathfrak{gl}_{m|n})$ on $\mathcal{S}_{m|n}^{m|n}$ is given by, for $A=(a_{ij})\in M(m|n)$,
\begin{align*}
\mathop{E_{h,h+1}\cdot X^{(A)}}\limits_{(h\neq m)}=&\sum_{{j\in I_{m|n}}}\v_{h}^{\sum_{y>j}(a_{hy}-a_{h+1,y})}[a_{h+1,j}]X^{(A+e_{hj}-e_{h+1,j})},\\
E_{m,m+1}\cdot X^{(A)}=&\sum_{{j\in I_{m|n}}}(-1)^{\sum_{(x,y)<(m+1,j)}\tilde{a}_{xy}+\hat{j}}\\
&\times\v_{m}^{\sum_{y<j}(a_{h+1,y}-a_{h,y})}[a_{m+1,j}]X^{(A+e_{mj}-e_{m+1,j})},\\
\mathop{E_{h+1,h}\cdot X^{(A)}}\limits_{(h\neq m)}=&\sum_{{j\in  I_{m|n}}}\v_{h+1}^{\sum_{y<j}(a_{h+1,y}-a_{h,y})}[a_{hj}]X^{(A-e_{hj}+e_{h+1,j})},\\
E_{m+1,m}\cdot X^{(A)}=&\sum_{{j\in I_{m|n}}}(-1)^{\sum_{(x,y)<(m,j)}\tilde{a}_{xy}+\hat{j}+1}\\
&\times\v_{m+1}^{\sum_{y>j}(a_{m+1,y}+a_{my})}[a_{mj}]X^{(A-e_{mj}+e_{m+1,j})}.\end{align*}
\end{proposition}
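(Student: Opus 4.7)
The plan is to leverage the tensor product structure $\mathcal{S}^{m|n}_{m|n}=(S_{m|n})^{\otimes m}\otimes(\Lambda_{m|n})^{\otimes n}$ and reduce the problem to the single-factor quantum differential operator formulas recalled just before the statement. First I would decompose each monomial basis element column by column as $X^{(A)}=X_{*1}^{(A)}\otimes X_{*2}^{(A)}\otimes\cdots\otimes X_{*,m+n}^{(A)}$, where $X_{*j}^{(A)}=X_{1j}^{(a_{1j})}X_{2j}^{(a_{2j})}\cdots X_{m+n,j}^{(a_{m+n,j})}$ is the monomial in the $j$-th tensor factor. The action of $E_{h,h+1}$ on this tensor product is controlled by the iterated coproduct
\begin{equation*}
\Delta^{(m+n-1)}(E_{h,h+1})=\sum_{j=1}^{m+n}1^{\otimes(j-1)}\otimes E_{h,h+1}\otimes (K_hK_{h+1}^{-1})^{\otimes(m+n-j)},
\end{equation*}
and the parallel expression for $E_{h+1,h}$, whose terms are indexed by the column $j$ where the generator acts non-trivially and produce a basis element $X^{(A+e_{hj}-e_{h+1,j})}$.

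For each summand I would process three parts in turn: the $j-1$ trivial factors on the left contribute nothing beyond a possible super sign; the non-trivial action at position $j$ is given by the single-factor quantum differential operator formula and yields the $[a_{h+1,j}]$ coefficient together with $X^{(A+e_{hj}-e_{h+1,j})}$; and the $K_hK_{h+1}^{-1}$-action on each column $y>j$ multiplies by $\v_h^{a_{hy}}\v_{h+1}^{-a_{h+1,y}}$, collecting to $\v_h^{\sum_{y>j}(a_{hy}-a_{h+1,y})}$ when $h\neq m$ (since $\v_h=\v_{h+1}$), and to $\v_m^{\sum_{y>j}(a_{my}+a_{m+1,y})}$ when $h=m$ (since $\v_{m+1}^{-1}=\v=\v_m$). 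In the case $h\neq m$ the generator $E_{h,h+1}$ is even, so no super signs intervene and the formula drops out immediately.

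The main obstacle is the super-sign bookkeeping in the case $h=m$, where $E_{m,m+1}$ is odd. Two independent parity contributions must be reconciled: commuting this odd operator past the first $j-1$ tensor factors contributes a sign equal to the combined parity of those factors, namely $\sum_{i<j,x}(\hat x+\hat i)a_{xi}\pmod 2$, while the extra $(-1)^{\delta_{hm}\hat a}$ sign that is built into the $\Lambda_{m|n}$-action of $E_{m,m+1}$ contributes further signs from the odd generators in the $j$-th column itself. Assembling these contributions, simplifying modulo $2$, and accounting for the parity of the $X_{m+1,j}$-slot produces the single clean sign $(-1)^{\sum_{(x,y)<(m+1,j)}(\hat x+\hat y)a_{xy}}$ recorded in the statement. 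Once the $E_{h,h+1}$ formulas are settled, those for $E_{h+1,h}$ follow by an essentially mirror calculation, or alternatively by invoking the anti-involution $\omega$ from \eqref{anti} together with the property $\omega(X^{(A)})=(-1)^{s(A)+\hat g}X^{(A^T)}$ derived earlier in the section.
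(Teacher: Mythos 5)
Your overall strategy is the right one, and since the paper itself offers no proof of Proposition~\ref{prop:3.1} (it is quoted from \cite[Lemma 3.3]{DZ20}), the column-wise tensor decomposition, the iterated coproduct $\Delta^{(m+n-1)}(E_{h,h+1})=\sum_{j}1^{\otimes(j-1)}\otimes E_{h,h+1}\otimes (K_hK_{h+1}^{-1})^{\otimes(m+n-j)}$, and the super-sign bookkeeping you describe are exactly the intended mechanism. Your treatment of the $\v$-powers (collapsing $\v_h^{a_{hy}}\v_{h+1}^{-a_{h+1,y}}$ to $\v_h^{a_{hy}-a_{h+1,y}}$ for $h\neq m$ and to $\v_m^{a_{my}+a_{m+1,y}}$ for $h=m$) and your decomposition of the sign for $h=m$ into the commutation sign $\sum_{y<j}\sum_x(\hat x+\hat y)a_{xy}$ plus the intrinsic $(-1)^{\delta_{hm}\hat a}$ of the $\Lambda_{k|l}$-factor are both correct and do recombine to $\sum_{(x,y)<(m+1,j)}(\hat x+\hat y)a_{xy}$ under the column-major order on index pairs.

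The genuine gap is the coefficient. You assert that the single-factor formula ``yields the $[a_{h+1,j}]$ coefficient,'' but the single-factor action recalled just before the statement is $E_{h,h+1}\cdot X^{(\mathbf{a})}=[a_h+1]X^{(\mathbf{a}+\mathbf{e}_h-\mathbf{e}_{h+1})}$, so applied to the $j$-th column it produces $[a_{hj}+1]$, not $[a_{h+1,j}]$; these quantum integers differ in general (e.g.\ $a_{hj}=1$, $a_{h+1,j}=1$ gives $[2]$ versus $[1]$), and they coincide only in the fermionic slots where both exponents are $0$ or $1$. The discrepancy is precisely the divided-power renormalization $\frac{\prod_{i,l}[a'_{il}]!}{\prod_{i,l}[a_{il}]!}=\frac{[a_{hj}+1]}{[a_{h+1,j}]}$ between $A$ and $A'=A+e_{hj}-e_{h+1,j}$: the coefficient $[a_{h+1,j}]$ of the Proposition is the one attached to the \emph{non-divided} monomials (consistently with the proof of Proposition~\ref{coor}, where $t^{(A)}$ carries no factorials), while the single-factor action is stated on divided powers. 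You must either carry out this renormalization explicitly and flag the resulting mismatch with the stated single-factor formula, or work throughout with undivided monomials; simply declaring that the local formula outputs $[a_{h+1,j}]$ is the one step of your argument that, as written, is false. A smaller point: your proposed shortcut of deducing the $E_{h+1,h}$ formulas from the $E_{h,h+1}$ ones via $\omega$ and $\omega(X^{(A)})=(-1)^{s(A)+\hat g}X^{(A^T)}$ converts a \emph{left} $E_{h+1,h}$-action into a \emph{right} $E_{h,h+1}$-action on the transposed monomial --- this is exactly how the paper derives Proposition~\ref{prop:3.2} \emph{from} Proposition~\ref{prop:3.1} --- so it is circular here; the direct mirror computation with $\Delta(E_{h+1,h})=E_{h+1,h}\otimes1+K_h^{-1}K_{h+1}\otimes E_{h+1,h}$ (which places the $K$-factors on the columns $y<j$) is the route to take.
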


In the following, we write down explicit formulas for the right action.
\begin{proposition}\label{prop:3.2}
The right action of $U_\v(\mathfrak{gl}_{m|n})$ on $X^{(A)}\in\mathcal{S}^{m|n}_{m|n}$ for $A=(a_{ij})\in M(m|n)$ is given by
\begin{align*}
\mathop{X^{(A)}\cdot E_{h,h+1}}\limits_{(h\neq m)}
=&\sum_{{i\in I_{m|n}}}(-1)^{(\hat{i}+\widehat{h+1})(\sum_{x>i}\tilde{a}_{xh}+\sum_{x<i}\tilde{a}_{x,h+1})}\\
&\times \v_{h+1}^{\sum_{x<i}(a_{x,h+1}-a_{xh})}[a_{ih}]X^{(A-e_{ih}+e_{i,h+1})},\\
X^{(A)}\cdot E_{m,m+1}
=&\sum_{{i\in I_{m|n}}}(-1)^{\sum_{(x,y)>(i,m)}\tilde{a}_{xy}+(\hat{i}+1)(\sum_{x>i}\tilde{a}_{xm}+\sum_{x<i}\tilde{a}_{x,m+1}+1)}\\
&\times \v_{m+1}^{\sum_{x<i}(a_{x,m+1}+a_{xm})}[a_{im}]X^{(A-e_{im}+e_{i,m+1})},\\
\mathop{X^{(A)}\cdot E_{h+1,h}}\limits_{(h\neq m)}
=&\sum_{{i\in I_{m|n}}}(-1)^{(\hat{i}+\hat{h})(\sum_{x>i}\tilde{a}_{xh}+\sum_{x<i}\tilde{a}_{x,h+1})}\\
&\times \v_{h}^{\sum_{x>i}(a_{xh}-a_{x,h+1})}[a_{i,h+1}]X^{(A+e_{ih}-e_{i,h+1})},\\
X^{(A)}\cdot E_{m+1,m}
=&\sum_{{i\in I_{m|n}}}(-1)^{\sum_{(x,y)>(i,m+1)}\tilde{a}_{xy}+\hat{i}(\sum_{x>i}\tilde{a}_{xm}+\sum_{x<i}\tilde{a}_{x,m+1}+1)}\\
&\times \v_{m}^{\sum_{x>i}(a_{xm}+a_{x,m+1})}[a_{i,m+1}]X^{(A+e_{im}-e_{i,m+1})}.
\end{align*}
\end{proposition}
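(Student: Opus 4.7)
The strategy is to reduce each right-action calculation to the left-action formula already established in Proposition~\ref{prop:3.1} via the defining identity
\[
X^{(A)}\cdot x \;=\; \omega\bigl(\omega(x)\cdot \omega(X^{(A)})\bigr).
\]
Since $\omega$ swaps $E_{h,h+1}$ with $E_{h+1,h}$ and sends $X^{(A)}$ to $(-1)^{s(A)+\hat{g}}X^{(A^T)}$, substituting yields, up to an overall sign, the left action of the swapped generator on the transposed monomial. Proposition~\ref{prop:3.1} evaluates this as an explicit sum, and a second application of $\omega$ to each resulting term restores the original indexing, producing the shifted basis elements $X^{(A\pm e_{ih}\mp e_{i,h+1})}$ after re-transposing.

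Concretely, for $X^{(A)}\cdot E_{h,h+1}$ with $h\neq m$ I would first compute $E_{h+1,h}\cdot X^{(A^T)}$ by Proposition~\ref{prop:3.1}, obtaining a sum over an index $j$; renaming $j$ as the row index $i$ of the original matrix via $(A^T)_{xy}=a_{yx}$ converts the exponents of $\v_{h+1}$ and the quantum integer $[a^T_{hj}]$ into $\v_{h+1}^{\sum_{x<i}a_{x,h+1}-a_{x,h}}$ and $[a_{ih}]$ respectively, and matches the shifted matrix $A^T-e_{hj}+e_{h+1,j}$ with the transpose of $A-e_{ih}+e_{i,h+1}$. The remaining task is sign bookkeeping: each term acquires three contributions to its sign, namely the factor $(-1)^{s(A)+\hat{g}(A)}$ from $\omega(X^{(A)})$, the factor $(-1)^{s(A')+\hat{g}(A')}$ from $\omega$ applied to the shifted basis element $X^{(A')}$, and, in the mixed cases $h=m$, an additional factor $(-1)^{\sum_{(x,y)<(m,j)}(\hat x+\hat y)(A^T)_{xy}}$ coming from the left-action formula.

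Computing $s(A')-s(A)$ and $\hat{g}(A')-\hat{g}(A)$ for the one-step shift and collecting the contributions modulo~$2$ is the heart of the calculation. In the non-singular cases one expects cancellation down to the parity $(\hat{i}+\widehat{h+1})\bigl(\sum_{x>i}(\hat x+\hat h)a_{xh}+\sum_{x<i}(\hat x+\widehat{h+1})a_{x,h+1}\bigr)$ appearing in the statement; in the mixed cases the extra sign from Proposition~\ref{prop:3.1} combines with the other two to yield the longer exponent containing $\sum_{(x,y)>(i,m)}(\hat x+\hat y)a_{xy}$ (or $\sum_{(x,y)>(i,m+1)}$ for $E_{m+1,m}$).

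The main obstacle will be precisely this sign reconciliation: the partial sums defining $s$, $\hat{g}$, and the mixed-case sign overlap on the $(x,y)$ index set in subtly different ways, and identifying their combined parity with the claimed exponent requires repeatedly splitting sums at the pivot indices $(i,h)$ and $(i,h+1)$. I would therefore carry out only two representative cases in full, namely $X^{(A)}\cdot E_{h,h+1}$ for $h\neq m$ and $X^{(A)}\cdot E_{m,m+1}$, and remark that the formulas for $E_{h+1,h}$ and $E_{m+1,m}$ follow by the entirely analogous argument with the roles of $E_{h,h+1}$ and $E_{h+1,h}$ interchanged, mirroring the structure of the left-action calculation in the proof of Proposition~\ref{coor}.
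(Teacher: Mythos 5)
Your proposal is correct and follows essentially the same route as the paper's own proof: transfer to the left action via $X^{(A)}\cdot x=\omega(\omega(x)\omega(X^{(A)}))$, apply Proposition~\ref{prop:3.1} to $X^{(A^T)}$, and reconcile the signs by computing the differences $s(A_i^T)-s(A)$ and $\hat g_i-\hat g$ for the one-step shift together with the mixed-case sign from the left-action formula. The paper carries out exactly this computation for the single case $X^{(A)}\cdot E_{m,m+1}$ and declares the others similar, so your plan to work two representative cases is, if anything, slightly more thorough.
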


\begin{proof}
We only compute the right action of $E_{m,m+1}$ on $X^{(A)}$ for $A=(a_{ij})\in M(m|n)$. 

Write $A_i=A-e_{im}+e_{i,m+1}$. 
We emphasize that we need to adjust the monomial  by \eqref{eq:3.1} when we consider the right action. With this in mind, we compute 
\begin{align*}
\gamma(A_i)-\gamma(A)=&-\hat{i}(\sum_{x>i,y<m}+\sum_{x\leq m<y})a_{xy}+(\hat{i}+1)(\sum_{x>i,y>m}+\sum_{x>m\geq y})a_{xy},\\
s(A_i)-s(A)=&-\hat{i}(\sum_{\hat{x}+\hat{y}=1}a_{xy}-1)+(\hat{i}+1)\sum_{\hat{x}+\hat{y}=1}a_{xy}.
\end{align*}
Moreover, we need the following formula
\begin{align*}
\sum_{(y,x)<(m,i)}\tilde{a}_{yx}=\hat{i}(\sum_{x\leq m<y}\tilde{a}_{xy}+\sum_{x\leq i,y\leq m}\tilde{a}_{xy}-1)+(\hat{i}+1)\sum_{x<i,y>m}a_{xy}.  
\end{align*}
Hence, according to the expression for the left action given in Proposition~\ref{prop:3.1}, we obtain
\begin{align*}
&X^{(A)}\cdot E_{m,m+1}\\
=&\sum_{{i\in I_{m|n}}}(-1)^{s(A)+\gamma(A)+s(A_i)+\gamma(A_i)}(-1)^{\sum_{(y,x)<(m,i)}\tilde{a}_{yx}+\hat{i}+1}\v_{m+1}^{\sum_{y<i}a_{y,m+1}+a_{ym}}[a_{im}]X^{(A_i)}\\
=&\sum_{{i\in I_{m|n}}}(-1)^{\sum_{(x,y)>(i,m)}\tilde{a}_{xy}+(\hat{i}+1)(\sum_{x>i}\tilde{a}_{xm}+\sum_{x<i}\tilde{a}_{x,m+1}+1)}\v_{m+1}^{\sum_{x<i}(a_{x,m+1}+a_{xm})}[a_{im}]X^{(A_i)}. 
\end{align*}
The arguments for other cases are similar.
\end{proof}

Recall $\mathcal{M}^{k|l}_{r|s}$ studied in Section \ref{sec:2}. Note that $\mathcal{M}^{m|n}_{m|n}$ and $\mathcal{S}^{m|n}_{m|n}$ each have a basis indexed by the same set $M(m|n)$, and therefore isomorphic as $\mathbb{C}(\v)$-spaces. Comparing Propositions \ref{prop:3.1} and \ref{prop:3.2} with Proposition~\ref{coor} at specialization $k=r=m, l=s=n$, we see $\mathcal{M}^{m|n}_{m|n}\simeq \mathcal{S}^{m|n}_{m|n}$ as $U_\v(\mathfrak{gl}_{m|n})$-bimodules. In particular, the left and right actions of $U_\v(\mathfrak{gl}_{m|n})$ on $\mathcal{S}^{m|n}_{m|n}$ commute with each other.

Let $k,r\leq m$, and $l,s\leq n$. We consider $S_{k|l}$ (resp. $\Lambda_{k|l}$) as a submodule of $S_{m|n}$ (resp. $\Lambda_{m|n}$) in the obvious way. Consider the tensor product
\begin{equation}\label{eq:S}
\mathcal{S}^{k|l}_{r|s}=(S_{k|l})^{\otimes r}\otimes(\Lambda_{k|l})^{\otimes s}.
\end{equation}
The basis element $X^{(\mathbf{a})}$ of $S_{k|l}$ or $\Lambda_{k|l}$ is in the form of 
\begin{equation*}
X^{(\mathbf{a})}=X_1^{(0)}\cdots X_{m-k+1}^{(a_{m-k+1})}\cdots X_{m+l}^{(a_{m+l})}\cdots X_{m+n}^{(0)}
\end{equation*}
and the basis element $X^{(A)}$ of $\mathcal{bS}^{k|l}_{r|s}$ is in the form of 
\begin{equation*}
X^{(A)}=X^{(\mathbf{0})}\cdots X^{(\mathbf{c}_{m-r+1})}\cdots X^{(\mathbf{c}_{m+s})}\cdots X^{(\mathbf{0})}.
\end{equation*}
In other words, the tensor product $\mathcal{S}^{k|l}_{r|s}$ has a basis 
$\{X^{(A)} \mid A\in M(k|l,r|s)\}$.

Regarding $\mathcal{S}^{k|l}_{r|s}$ as a submodule of $\mathcal{S}^{m|n}_{m|n}$, we immediately obtain the following propositions by Propositions \ref{prop:3.1} \& \ref{prop:3.2}.
\begin{proposition} \label{prop:3.3}
\item[(1)] The left action of $U_\v(\mathfrak{gl}_{k|l})$ on $X^{(A)}\in\mathcal{S}^{k|l}_{r|s}$ is as follows: for $h\in\tilde{I}_{r|s}$,
\begin{align*}
\mathop{E_{h,h+1}\cdot X^{(A)}}\limits_{(h\neq m)}=&\sum_{{j\in \tilde{I}_{s|r}}}\v_{h}^{\sum_{y>j}(a_{hy}-a_{h+1,y})}[a_{h+1,j}]X^{(A+e_{hj}-e_{h+1,j})},\\
E_{m,m+1}\cdot X^{(A)}=&\sum_{{j\in \tilde{I}_{s|r}}}(-1)^{\sum_{(x,y)>(m+1,j)}\tilde{a}_{xy}}\v_{m}^{\sum_{y<j}(a_{h+1,y}-a_{h,y})}[a_{m+1,j}]X^{(A+e_{mj}-e_{m+1,j})},\\
\mathop{E_{h+1,h}\cdot X^{(A)}}\limits_{(h\neq m)}=&\sum_{{j\in \tilde{I}_{s|r}}}\v_{h+1}^{\sum_{y<j}(a_{h+1,y}-a_{h,y})}[a_{hj}]X^{(A-e_{hj}+e_{h+1,j})},\\
E_{m+1,m}\cdot X^{(A)}=&\sum_{{j\in \tilde{I}_{s|r}}}(-1)^{\sum_{(x,y)>(m,j)}\tilde{a}_{xy}}\v_{m+1}^{\sum_{y>j}(a_{m+1,y}+a_{my})}[a_{mj}]X^{(A-e_{mj}+e_{m+1,j})}.\end{align*}

\item[(2)] The right action of $U_\v(\mathfrak{gl}_{r|s})$ on $X^{(A)}\in\mathcal{S}^{k|l}_{r|s}$ is as follows: for $h\in\tilde{I}_{k|l}$,
\begin{align*}
\mathop{X^{(A)}\cdot E_{h,h+1}}\limits_{(h\neq m)}
=&\sum_{{i\in \tilde{I}_{k|l}}}(-1)^{(\hat{i}+\widehat{h+1})(\sum_{x>i}\tilde{a}_{xh}+\sum_{x<i}\tilde{a}_{x,h+1})}\\
&\times \v_{h+1}^{\sum_{x<i}(a_{x,h+1}-a_{xh})}[a_{ih}]X^{(A-e_{ih}+e_{i,h+1})},\\
X^{(A)}\cdot E_{m,m+1}
=&\sum_{{i\in \tilde{I}_{k|l}}}(-1)^{\sum_{(x,y)<(i,m)}\tilde{a}_{xy}+(\hat{i}+1)(\sum_{x>i}\tilde{a}_{xm}+\sum_{x<i}\tilde{a}_{x,m+1})}\\
&\times \v_{m+1}^{\sum_{x<i}(a_{x,m+1}+a_{xm})}[a_{im}]X^{(A-e_{im}+e_{i,m+1})},\\
\mathop{X^{(A)}\cdot E_{h+1,h}}\limits_{(h\neq m)}
=&\sum_{{i\in \tilde{I}_{k|l}}}(-1)^{(\hat{i}+\hat{h})(\sum_{x>i}\tilde{a}_{xh}+\sum_{x<i}\tilde{a}_{x,h+1})}\\
&\times \v_{h}^{\sum_{x>i}(a_{xh}-a_{x,h+1})}[a_{i,h+1}]X^{(A+e_{ih}-e_{i,h+1})},\\
X^{(A)}\cdot E_{m+1,m}
=&\sum_{{i\in \tilde{I}_{k|l}}}(-1)^{\sum_{(x,y)<(i,m+1)}\tilde{a}_{xy}+\hat{i}(\sum_{x>i}\tilde{a}_{xm}+\sum_{x<i}\tilde{a}_{x,m+1})}\\
&\times \v_{m}^{\sum_{x>i}(a_{xm}+a_{x,m+1})}[a_{i,m+1}]X^{(A+e_{im}-e_{i,m+1})}.
\end{align*}
\end{proposition}

\section{Beilinson-Lusztig-MacPherson construction}

\subsection{Symmetric group and parabolic subgroup}

Let $W=\mathfrak{S}_d$ be the symmetric group with simple reflections $S=\{s_i=(i,i+1) \mid 1\leq i<d\}$ and unity $\mathbbm{1}$. Denote by $\ell: W\rightarrow\mathbb{N}$ the length function with respect to $S$.

Let
\begin{equation*}
\Lambda(m+n,d)=\{\lambda=(\lambda_1,\lambda_2,\ldots,\lambda_{m+n}) \in\mathbb{N}^{m+n} \mid \lambda_1+\lambda_2+\ldots+\lambda_{m+n}=d\}
\end{equation*} be the set of compositions of $d$ into $m+n$ parts. 
%
The parabolic subgroup $W_{\lambda}=W_{\lambda}^1\times\cdots\times W_{\lambda}^{m+n}$ associated with $\lambda\in\Lambda(m+n,d)$ is defined as the subgroup of $W$ generated by $S\backslash\{s_{\tilde{\lambda}_1},s_{\tilde{\lambda}_2},\ldots,s_{\tilde{\lambda}_{m+n-1}}\}$, where $W_{\lambda}^i$ keeps the following set $R^{\lambda}_i$ invariant: \begin{equation*}
R^{\lambda}_i=\{\tilde{\lambda}_{i-1}+1,\tilde{\lambda}_{i-1}+2,\ldots,\tilde{\lambda}_{i-1}+\lambda_i\},\quad 1\leq i\leq m+n.
\end{equation*}
Here $\tilde{\lambda}_0=0$ and $\tilde{\lambda}_i=\lambda_1+\lambda_2+\cdots+\lambda_i$, ($1\leq i\leq m+n$).
Write $W_{\lambda^{(0)}}:=W_\lambda^1\times\cdots\times W_\lambda^m$ and $W_{\lambda^{(1)}}:=W_\lambda^{m+1}\times\cdots\times W_\lambda^{m+n}$.

For $\lambda,\rho\in\Lambda(m+n,d)$, let $\mathcal{D}_{\lambda}$ be the set of all minimal length representatives of the right cosets $W_{\lambda}\backslash W$, and let $\mathcal{D}_{\lambda\rho}=\mathcal{D}_{\lambda}\cap\mathcal{D}^{-1}_{\rho}$ be the set of all minimal length representatives of the double cosets $W_{\lambda}\backslash W/W_{\rho}$.
For $g\in\mathcal{D}_{\lambda\rho}$, let $\rho g^{-1}\cap\lambda$ denote the composition such that
\begin{equation*}
W_{\lambda\cap\rho g^{-1}}:=W_{\lambda}\cap gW_{\rho}g^{-1},
\end{equation*}
which decomposes into four parabolic subgroups
\begin{equation*}
W_{\lambda\cap\rho g^{-1}}=(W^{g}_{\lambda^{(0)}}\cap W_{\rho^{(0)}})\times (W^{g}_{\lambda^{(1)}}\cap W_{\rho^{(1)}})\times (W^{g}_{\lambda^{(0)}}\cap W_{\rho^{(1)}})\times (W^{g}_{\lambda^{(1)}}\cap W_{\rho^{(0)}}),
\end{equation*}
where $W^{g}_{\lambda^{(i)}}:=g W_{\lambda^{(i)}} g^{-1}$.
We say that $g$ satisfies the even-odd trivial intersection property if $W^{g}_{\lambda^{(0)}}\cap W_{\rho^{(1)}}=W^{g}_{\lambda^{(1)}}\cap W_{\rho^{(0)}}=\{\mathbbm{1}\}$. 
Denote
\begin{equation*}
\mathcal{D}^{\circ}_{\lambda\rho}=\{g\in\mathcal{D}_{\rho\lambda} \mid W^{g}_{\lambda^{(0)}}\cap W_{\rho^{(1)}}=W^{g}_{\lambda^{(1)}}\cap W_{\rho^{(0)}}=\{\mathbbm{1}\}\}.
\end{equation*}

The composition $\lambda \cap\rho g^{-1}$ can be described in terms of the matrix
\begin{equation*}
\jmath(\lambda,g,\rho):=(a_{ij})\in \mathrm{Mat}_{m+n}(\mathbb{N})\quad \mbox{with}\quad a_{ij}=|R^{\lambda}_i\cap g^{-1}(R^{\rho}_j)|,
\end{equation*}
saying
\begin{equation*}
\lambda \cap\rho g^{-1}=(a_{11},a_{12},\ldots,a_{1,m+n},a_{21},a_{22},\ldots, a_{2,m+n},\ldots,a_{m+n,1},a_{m+n,2},\ldots,a_{m+n,m+n}).
\end{equation*}
Denote
\begin{equation*}
M(m|n,d)=\{\jmath(\lambda,g,\rho) \mid \lambda,\rho\in\Lambda(m+n,d),g\in\mathcal{D}^{\circ}_{\lambda\rho}\},
\end{equation*}
which is a subset of $M(m|n)$.
For $A=\jmath(\lambda,g,\rho)\in M(m|n,d)$, we denote $\mathrm{ro}(A)=\lambda$ and $\mathrm{co}(A)=\rho$. 

\subsection{Hecke algebra}
The Hecke algebra $\mathbf{H}=\mathbf{H}_{\v}(W)$ is spanned by a $\mathbb{C}(\v)$-basis $\{T_{w}|w\in W\}$, whose multiplication is defined by, for $s\in S$ and $w\in W$,
\begin{equation*}
T_{w}T_{s}=\left\{
\begin{aligned}
&T_{ws},&&\ell(ws)>\ell(w);\\
&(\v^2-1)T_{w}+\v^2 T_{ws},&&\text{otherwise}.\\
\end{aligned}
\right.
\end{equation*}

Let $V(m|n)$ be a $(m+n)$-dimensional $\mathbb{C}(\v)$-space with a basis $v_1,v_2,\ldots,v_{m+n}$. Its tensor product $V(m|n)^{\otimes d}$ has a basis $\{v_{\i}|\i\in I(m|n,d)\}$, where
\begin{equation*}
I(m|n,d):=\{\i=(i_1,i_2,\cdots,i_d)|1\leq i_j\leq m+n\}, \quad \mbox{and}
\end{equation*}
\begin{equation*}
v_{\i}=v_{i_1}\otimes v_{i_2}\otimes\cdots\otimes v_{i_d}=v_{i_1}v_{i_2}\cdots v_{i_d}.\end{equation*}
There is a right $W$-action on $I(m|n,d)$ as follows: for $w\in W$, $\i\in I(m|n,d)$,
\begin{equation*}
\i w=(i_{w^{-1}(1)},i_{w^{-1}(2)},\cdots,i_{w^{-1}(d)}).
\end{equation*}

For $\lambda\in\Lambda(m+n,d)$, define $\i_{\lambda}\in\Lambda(m+n,d)$ by
\begin{equation*}\i_{\lambda}=(\underbrace{1,\ldots ,1}_{\lambda_1}, \underbrace{2,\ldots ,2}_{\lambda_2},\ldots, \underbrace {m+n,\ldots ,m+n}_{\lambda_{m+n}}\}=(1^{\lambda_1},2^{\lambda_2},\ldots,(m+n)^{\lambda_{m+n}}).
\end{equation*} We rewrite $$v_\lambda:=v_{\i_\lambda}=\underbrace{v_1\otimes\cdots\otimes v_1}_{\lambda_1}\otimes \underbrace{v_2\otimes\cdots\otimes v_2}_{\lambda_2}\otimes \cdots\otimes \underbrace{v_{m+n}\otimes\cdots\otimes v_{m+n}}_{\lambda_{m+n}}.$$ 

\begin{lemma}{\cite[(1.0.10)]{DG14}} The tensor space
$V(m|n)^{\otimes d}$ is a right $\mathbf{H}$-module via
\begin{equation*}
v_{\i}T_{s_k}=\left\{
\begin{aligned}
&(-1)^{\hat{i}_k\hat{i}_{k+1}}v_{\i{s_k}},&&i_k<i_{k+1};\\
&\v^2 v_{\i},&&1\leq i_k=i_{k+1}\leq m;\\
&-v_{\i},&&m+1\leq i_k=i_{k+1};\\
&(-1)^{\hat{i}_k\hat{i}_{k+1}}\v^2 v_{\i{s_k}}+(\v^2-1)v_{\i},&&i_k>i_{k+1}.\\
\end{aligned}
\right.
\end{equation*}
Moreover, $V(m|n)^{\otimes d}\simeq\mathop{\bigoplus}\limits_{\lambda\in I(m|n,d)}v_{\lambda}\mathbf{H}$ as right $\mathbf{H}$-modules.
\end{lemma}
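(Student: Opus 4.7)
The plan is to verify the defining relations of $\mathbf{H}$ on the spanning set $\{v_{\i}\}$ and then to produce the direct-sum decomposition by a bijection with sorted index sequences. Since $\mathbf{H}$ is presented by generators $T_{s_k}$ subject to the quadratic relation $T_{s_k}^2=(\v^2-1)T_{s_k}+\v^2$, the commutation $T_{s_k}T_{s_l}=T_{s_l}T_{s_k}$ for $|k-l|\geq 2$, and the length-three braid relation, it suffices to check each of these on the basis $\{v_{\i}\}$.

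For the quadratic relation I would split into the four cases of the defining formula. The increasing case $i_k<i_{k+1}$ pairs with the decreasing case under the swap $\i\leftrightarrow \i s_k$: two consecutive applications yield $\v^2 v_{\i}+(\v^2-1)v_{\i}T_{s_k}$, with the sign $(-1)^{\hat{i}_k\hat{i}_{k+1}}$ squaring to $+1$. In the equal-entry cases, $v_{\i}$ is a $T_{s_k}$-eigenvector with eigenvalue $\v^2$ when $i_k\leq m$ and eigenvalue $-1$ when $i_k\geq m+1$; both are roots of $(X-\v^2)(X+1)=0$, which explains why the odd diagonal is assigned eigenvalue $-1$ rather than $\v^2$. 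The commutation relation for $|k-l|\geq 2$ is immediate because $s_k$ and $s_l$ act on disjoint pairs of tensor slots.

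The braid relation is the main technical obstacle. After reducing to the case of three distinct entries $(i_k,i_{k+1},i_{k+2})$ via the quadratic relation (the equal-entry cases collapse to substituting the eigenvalues), one expands both $v_{\i}T_{s_k}T_{s_{k+1}}T_{s_k}$ and $v_{\i}T_{s_{k+1}}T_{s_k}T_{s_{k+1}}$ as linear combinations of the six monomials $v_{\i w}$ for $w\in \mathfrak{S}_3$, and matches them term by term. The difficulty is the bookkeeping of parity signs $(-1)^{\hat{a}\hat{b}}$ that arise whenever adjacent indices straddle the boundary $m$; one must verify that both sides produce the totally symmetric sign $(-1)^{\hat{i}_k\hat{i}_{k+1}+\hat{i}_{k+1}\hat{i}_{k+2}+\hat{i}_k\hat{i}_{k+2}}$ in front of the fully-reversed monomial, with matching scalars on the four intermediate terms produced by $(\v^2-1)$-contributions from the decreasing cases.

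For the decomposition $V(m|n)^{\otimes d}\simeq \bigoplus_{\lambda\in\Lambda(m|n,d)}v_{\lambda}\mathbf{H}$, I would use that every $\i\in I(m|n,d)$ has a unique type $\lambda=\mathrm{type}(\i)\in\Lambda(m|n,d)$ obtained by counting multiplicities, together with a unique minimal-length $d_{\i}\in\mathcal{D}_{\lambda}^{-1}$ satisfying $\i=\i_{\lambda}d_{\i}$. Induction on $\ell(d_{\i})$, using the increasing-case formula, shows that $v_{\i}$ is a scalar multiple of $v_{\lambda}T_{d_{\i}}$, so $v_{\lambda}\mathbf{H}$ contains every monomial of type $\lambda$. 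Conversely, $T_{s_k}$ for $s_k\in W_{\lambda}$ acts on $v_{\lambda}$ by a scalar (either $\v^2$ or $-1$), so $v_{\lambda}\mathbf{H}$ is spanned by $\{v_{\lambda}T_{d}\mid d\in\mathcal{D}_{\lambda}\}$, a set of cardinality $|\mathcal{D}_{\lambda}|$ which equals the number of $\i$ of type $\lambda$. Linear independence of monomials of distinct types makes the sum $\sum_{\lambda}v_{\lambda}\mathbf{H}$ direct and forces equality with the full tensor space.
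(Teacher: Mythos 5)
Your proposal is sound, but note that the paper does not prove this lemma at all: it is quoted verbatim from \cite[(1.0.10)]{DG14}, so there is no in-paper argument to compare against. What you outline is essentially the standard verification carried out in the cited source (and its antecedents): check the quadratic, commutation and braid relations of $\mathbf{H}_\v(\mathfrak{S}_d)$ on the signed permutation action, then decompose by content. Your treatment of the quadratic relation is correct, including the observation that the two diagonal cases realize the two roots of $X^2-(\v^2-1)X-\v^2=(X-\v^2)(X+1)$, which is exactly why the purely odd diagonal must carry eigenvalue $-1$. Two places deserve more care than your sketch gives them. First, in the braid relation the equal-entry configurations do not simply ``collapse to substituting eigenvalues'': for instance when $i_k=i_{k+1}\neq i_{k+2}$ or $i_k=i_{k+2}\neq i_{k+1}$ the three operators genuinely interleave scalar actions with transpositions, and one must track the parity signs through each of these subcases separately (a finite but nontrivial list). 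Second, your induction showing $v_{\i}=\pm\, v_{\lambda}T_{d_{\i}}$ silently uses the standard fact that for $d\in\mathcal{D}_{\lambda}$ every step of a reduced expression of $d$ applies the strictly increasing case of the action formula; this holds because $d\in\mathcal{D}_\lambda$ forces the pair swapped at each step to come from distinct blocks of $\i_\lambda$, but it should be stated and justified (cf. \cite[\S8]{DDPW08}), as otherwise a step could land in a decreasing or equal-entry case and destroy the monomiality. With those two points filled in, the counting argument $\dim v_\lambda\mathbf{H}=|\mathcal{D}_\lambda|$ and the partition of the basis $\{v_{\i}\}$ by content correctly yield the direct sum decomposition (note the index set in the displayed decomposition should be $\Lambda(m|n,d)$, as you wrote, rather than $I(m|n,d)$ as misprinted in the statement).
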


\subsection{Quantum Schur superalgebra}
Let
\begin{equation*}
\mathbf{S}=\mathbf{S_{\v}}(m|n,d):=\mathrm{End}_{\mathbf{H}}(V(m|n)^{\otimes d})
\end{equation*}
be the quantum Schur superalgebra over $\mathbb{C}(\v)$ with a $\mathbb{Z}_2$-grading
\begin{equation*}
\mathbf{S}_{i}:=\bigoplus_{|\lambda^{(1)}|+|\mu^{(1)}| \equiv i\ (\text{mod}\ 2)}\mathrm{Hom}_{\mathbf{H}}(v_{\lambda}\mathbf{H},v_{\mu}\mathbf{H}),\quad i\in \mathbb{Z}_2.
\end{equation*}

For $\i,\j\in I(m|n,d)$, let $e_{\i\j}\in\text{End}_{\mathbb{C}(\v)}(V(m|n)^{\otimes d})$ be the $\mathbb{C}(\v)$-linear map determined by
\begin{equation*}
(v_{\i'})e_{\i\j}=
\left\{
\begin{aligned}
&v_{\j},&\mbox{if } \i'=\i;\\
&0,&\mbox{if } \i'\neq\i.
\end{aligned}
\right.
\end{equation*}
For $A=\jmath(\lambda,g,\rho)\in M(m|n,d)$, write
\begin{equation*}
e_{A}:=\sum_{w\in\mathcal{D}_{\lambda \cap\rho g^{-1}}}\v^{-2\ell(w)}T_{w^{-1}}e_{\i_\lambda,\i_{\rho g^{-1}}}T_{w}\in\text{End}_{\mathbb{C}(\v)}(V(m|n)^{\otimes d}).
\end{equation*}

It was shown in \cite[Lemma 1.2]{DG14} that the set $\{e_{A}\mid A\in M(m|n,d)\}$ forms a $\mathbb{C}(\v)$-basis of $\mathbf{S}$. Furthermore, $\mathbf{S}$ is a quotient of $U_\v(\mathfrak{gl}_{m|n})$, i.e., there is a surjective homomorphism $\varphi: U_\v(\mathfrak{gl}_{m|n})\to \mathbf{S}$.

\subsection{Key multiplication formuals}
Let $e_{ij}\in M(m|n)$ be the matrix whose $(i,j)$-th entry is $1$ and the others are $0$, $(i,j\in I_{m|n})$.
Denote $\llbracket a\rrbracket=\frac{\v^{2a}-1}{\v^2-1}$ for $a\in\mathbb{N}$.

\begin{proposition}{\cite[Lemma 3.1]{DG14}}\label{right}
If $A,B,C\in M(m|n,d)$, $h\in I_{m|n}/\{m+n\}$ satisfy that $\mathrm{co}(A)=\mathrm{ro}(B)=\mathrm{ro}(C)$ and $B-e_{h,h+1}$ and $C-e_{h+1,h}$ are diagonal, then the following multiplication formulas hold: 
\begin{align*}
e_{A}e_{B}&=\sum_{a_{ih}\geq1} f_i(\v;A,h+1)\llbracket a_{i,h+1}+1\rrbracket_{\v_{h+1}}e_{A-e_{ih}+e_{i,h+1}},  
\\
e_{A}e_{C}&=\sum_{a_{i,h}\geq 1}g_i(\v;A,h)\llbracket a_{ih}+1\rrbracket_{\v_h}e_{A+e_{ih}-e_{i,h+1}},    
\end{align*}
where $e_{A\pm e_{ih}\mp e_{i,h+1}}=0$ whenever $A\pm e_{ih}\mp e_{i,h+1}\notin M(m|n,d)$, and
\begin{align*}
&f_i(\v;A,h+1)=
\left\{
\begin{aligned}
&\v^{2\sum_{x<i}a_{x,h+1}},&&h<m;\\
&(-1)^{\sum_{\scalebox{0.7}{$\substack{x<i\\y>m}$}}a_{xy}},&&h=m;\\
&\v^{-2\sum_{x>i}a_{xh}},&&h>m,
\end{aligned}
\right.\\
&g_i(\v;A,h)=
\left\{
\begin{aligned}
&\v^{2\sum_{x>i}a_{xh}},&&h<m;\\
&(-1)^{\sum_{\scalebox{0.7}{$\substack{x<i\\y>m}$}}a_{xy}}\v^{-2\sum_{x<i}a_{xm}}\v^{2\sum_{x>i}a_{xm}},&&h=m;\\
&\v^{-2\sum_{x<i}a_{x,h+1}},&&h>m.
\end{aligned}
\right.
\end{align*}
\end{proposition}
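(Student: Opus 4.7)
My plan is to verify the two multiplication formulas by evaluating both sides on a tensor basis vector of $V(m|n)^{\otimes d}$ (chosen so that the composition acts non-trivially) and matching term by term. Set $\mu = \mathrm{co}(A) = \mathrm{ro}(B) = \mathrm{ro}(C)$. Since $B - e_{h,h+1}$ is diagonal, $e_B$ equals $\phi^{g_B}_{\mu,\nu}$ for the composition $\nu$ obtained from $\mu$ by moving one unit from position $h$ to position $h+1$ and a short permutation $g_B$ realising that transfer; analogously $e_C = \phi^{g_C}_{\mu,\nu'}$ with the roles of columns $h$ and $h+1$ swapped. The first step is to compute the image of $e_A$ on a standard monomial, using Lemma \ref{3.1} together with the formula for $v_\rho T_{g^{-1}}$ recalled in \S4.2, as a signed tensor monomial indexed by $A$. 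Composing with $e_B$ (respectively $e_C$) then produces a sum indexed by the choice of the row $i$ in which one unit migrates between columns $h$ and $h+1$.

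The combinatorial heart of the proof is to recognise this sum as a linear combination of the basis elements $e_{A-e_{ih}+e_{i,h+1}}$ (respectively $e_{A+e_{ih}-e_{i,h+1}}$). Each such basis element is itself a symmetrisation $\sum_w \v^{-2\ell(w)} T_{w^{-1}} e_{\cdot,\cdot} T_w$ over a parabolic double coset, and the matching reduces precisely to the Hecke-algebra identities of Lemmas \ref{2.6} and \ref{2.8}: they show how a staircase sum of the form $\sum_{j} T_{s_h s_{h-1}\cdots s_{h-j}}$ multiplied by a parabolic sum for one composition reassembles into a parabolic sum for its neighbour. Iterating these identities $a_{i,h+1}$ (respectively $a_{ih}$) times and collecting the resulting geometric series in $\v^2$ will produce the quantum integer $[[a_{i,h+1}+1]]_{\v_{h+1}}$ (respectively $[[a_{ih}+1]]_{\v_h}$) predicted in the statement.

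The main obstacle, and what distinguishes the super case from its ancestor in \cite{BLM90}, is the bookkeeping of super-signs across the three regimes $h<m$, $h>m$ and $h=m$. For $h \neq m$ the migrations occur entirely within the even block or entirely within the odd block, so no odd-odd transpositions arise from the $\mathbf{H}$-action on $V(m|n)^{\otimes d}$; the only prefactors are then the powers $\v^{\pm 2\sum a_{xy}}$ that count the entries of $A$ past which the migrating unit must commute in its row, which appear naturally when unwinding the divided power $\sum_j T_{s_h s_{h-1}\cdots s_{h-j}}$ acting on a letter that has to be transported across rows of $A$. For $h=m$ the migration crosses the even-odd boundary: each pair of odd letters exchanged by a Hecke braid contributes the factor $(-1)^{\hat{i}_k \hat{i}_{k+1}} = -1$, and a careful count gathers these signs into precisely the prefactor $(-1)^{\sum_{x<i,\,y>m} a_{xy}}$ appearing in $f_i(\v;A,m+1)$.

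For $e_A e_C$ at $h=m$ an additional reordering of the normal-form monomial $v_1^{a_{11}}\cdots v_{m+n}^{a_{m+n,m+n}}$ after the shift $A \to A + e_{im} - e_{i,m+1}$ produces the extra factors $\v^{-2\sum_{x<i} a_{xm}}\v^{2\sum_{x>i} a_{xm}}$ comprising $g_i(\v;A,m)$. Once these sign and $\v$-power contributions are matched term by term with the coefficients asserted in the statement, both formulas follow; the case analysis is entirely parallel for $e_A e_B$ and $e_A e_C$, so it suffices to carry out the details in one of them and then apply the anti-involution $\omega$ (or equivalently transpose the matrices and swap rows with columns) to obtain the other.
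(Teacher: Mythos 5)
First, a point of reference: the paper does not actually prove this proposition --- it is quoted from \cite[Lemma 3.1]{DG14} --- and the only multiplication formulas proved in the text are the companion \emph{left} ones (Proposition \ref{left}). That said, your outline (evaluate the product on the distinguished vector $v_{\mathrm{ro}(A)}$, expand $e_A$ via its defining sum over $\mathcal{D}_{\lambda\cap\rho g^{-1}}$, and reassemble the staircase sums $\sum_j T_{s_h s_{h-1}\cdots s_{h-j}}$ into parabolic sums using Lemmas \ref{2.6} and \ref{2.8}, with the quantum integers $[[\,\cdot\,]]$ emerging as geometric series) is exactly the method the paper uses for Proposition \ref{left}, and is the right skeleton here.

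There are, however, two genuine problems. The decisive one is your closing reduction: you propose to compute only $e_Ae_B$ and then obtain $e_Ae_C$ by applying the anti-involution (``transpose the matrices and swap rows with columns''). The anti-automorphism $e_A\mapsto e_{A^T}$ of \cite[Lemma 1.2]{DG14} reverses products, so applied to $e_Ae_B$ it produces $e_{B^T}e_{A^T}$, which is a \emph{left} multiplication by the lowering-type generator $e_{B^T}$ acting on $e_{A^T}$ --- not the right multiplication $e_Ae_C$. The two right-hand formulas are simply not exchanged by this involution. Worse, the paper explicitly records, in the paragraph following Proposition \ref{right}, that this transpose trick works in the non-super case but \emph{fails} in the super setting: the parity signs such as $(-1)^{\sum_{x<i,\,y>m}a_{xy}}$ and the normalisations do not transform correctly under $A\mapsto A^T$. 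So both formulas must be computed directly, just as the paper treats $[B][A]$ and $[C][A]$ as separate computations in Proposition \ref{left}. Secondly, your sign analysis claims that for $h\neq m$ ``no odd-odd transpositions arise,'' so that the only prefactors are powers of $\v^{\pm2}$. This is false for $h>m$: there both migrating letters $v_h$ and $v_{h+1}$ are odd, so every Hecke transposition encountered carries a sign ($T_{s_k}$ acts by $-1$ on a repeated odd letter and contributes $(-1)^{\hat{i}_k\hat{i}_{k+1}}=-1$ when swapping two distinct odd letters). These signs do ultimately cancel into the stated sign-free coefficient $\v^{-2\sum_{x>i}a_{ih}}$, but that cancellation is the entire content of the $h>m$ case (compare Case 3 in the proof of Proposition \ref{left}) and cannot be asserted away.
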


Let $\tau$ be the anti-involution of $\mathbf{S}$ induced from the anti-involution $\omega$ of $U_\v(\mathfrak{gl}_{m|n})$ via the surjective homomorphism $\phi: U_\v(\mathfrak{gl}_{m|n})\to \mathbf{S}$, so that $\tau\circ\varphi=\varphi\circ\omega$. 
By \cite[Remark 5.1(2)]{DG15}, there exists a basis $\{\phi_A\}_{A\in M(m|n,d)}$ of $\mathbf{S}$ such that $\tau(\phi_A)=\phi_{A^T}$.
Our basis element $e_{A}$ is denoted by $N_{A^T}$ in \cite{DG14}. Comparing $e_A=N_{A^T}$ with $\phi_A$ therein, we have 
\begin{equation}\label{eq:taueA}
\tau(e_A)=(-1)^{\widehat{A}+\widehat{A^T}}e_{A^T},
\end{equation}
where 
$\widehat{A}:=\sum_{\substack{i>k>m\\j<l}} a_{ij}a_{kl}$ for $A=(a_{ij})\in M(m|n,d)$.

\begin{proposition}\label{left}
Let $A,B,C\in M(m|n,d)$, $h\in I_{m|n}/\{m+n\}$ be such that $\mathrm{ro}(A)=\mathrm{co}(B)=\mathrm{co}(C)$ and $B-e_{h,h+1}$ and $C-e_{h+1,h}$ are diagonal. The following multiplication formulas hold: 
\begin{align*}
e_{B}e_{A}&=\sum_{a_{h+1,j}\geq1}f'_j(\v;A,h)\llbracket a_{hj}+1\rrbracket_{\v_h}e_{A+e_{hj}-e_{h+1,j}},    
\\
e_{C}e_{A}&=\sum_{a_{hj}\geq1}g'_j(\v;A,h+1)\llbracket a_{h+1,j}+1\rrbracket_{\v_{h+1}}e_{A-e_{hj}+e_{h+1,j}}, 
\end{align*}
where $e_{A\pm e_{hj}\mp e_{h+1,j}}=0$ whenever $A\pm e_{hj}\mp e_{h+1,j}\notin M(m|n,d)$, and

\begin{equation*}
f'_j(\v;A,h)=\left\{
\begin{aligned}
&(-1)^{\sum_{y>j}\hat{j}\hat{y}a_{hy}+\sum_{y<j}\hat{j}\hat{y}a_{h+1,y}}\v^{2\sum_{y>j}a_{hy}},&&h<m;\\
&(-1)^{\sum_{y>j}\hat{j}\hat{y}a_{my}+\sum_{y<j}(1+\hat{j}\hat{y})a_{m+1,y}}\v^{-2\sum_{y<j}a_{m+1,y}}\v^{2\sum_{y>j}a_{my}},&&h=m;\\
&(-1)^{\sum_{y>j}(1+\hat{j}\hat{y})a_{hy}+\sum_{y<j}(1+\hat{j}\hat{y})a_{h+1,y}}\v^{-2\sum_{y<j}a_{h+1,y}},&&h>m,
\end{aligned}
\right.
\end{equation*}
\begin{equation*}
g'_j(\v;A,h+1)=\left\{
\begin{aligned}
&(-1)^{\sum_{y>j}\hat{j}\hat{y}a_{hy}+\sum_{y<j}\hat{j}\hat{y}a_{h+1,y}}\v^{2\sum_{y<j}a_{h+1,y}},&&h<m;\\
&(-1)^{\sum_{y>j}\hat{j}\hat{y}a_{my}+\sum_{y<j}(1+\hat{j}\hat{y})a_{m+1,y}},&&h=m;\\
&(-1)^{\sum_{y>j}(1+\hat{j}\hat{y})a_{hy}+\sum_{y<j}(1+\hat{j}\hat{y})a_{h+1,y}}\v^{-2\sum_{y>j}a_{hy}},&&h>m.
\end{aligned}
\right.
\end{equation*}
\end{proposition}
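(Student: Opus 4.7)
The plan is to prove the left multiplication formulas by direct computation in the tensor space realization $\mathbf{S}_{\v}(m|n,d) = \mathrm{End}_{\mathbf{H}}(V(m|n)^{\otimes d})$. Since $B - e_{h,h+1}$ and $C - e_{h+1,h}$ are only off-diagonal in a single entry, $e_B$ and $e_C$ are ``near-identity'' basis elements whose actions are tractable. I would handle $e_B e_A$ first; the case of $e_C e_A$ is fully symmetric.

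First I would set $\mu = \mathrm{co}(B) = \mathrm{ro}(A)$ and evaluate $(e_B e_A)(v_\eta)$ on a weight vector $v_\eta$ with $\eta = \mathrm{co}(A)$, using the explicit formula for $v_\rho T_{g^{-1}}$ recalled before Lemma \ref{2.6} together with the definition $e_A = \sum_{w\in\mathcal{D}_{\mu\cap\eta g_A^{-1}}} \v^{-2\ell(w)} T_{w^{-1}} e_{\i_\mu, \i_{\eta g_A^{-1}}} T_w$. This expresses $e_A(v_\eta)$ as (up to an overall sign and $\v$-power) the monomial $v_1^{a_{11}} v_2^{a_{12}} \cdots v_{m+n}^{a_{m+n,m+n}}$ arranged in row-major order. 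Because $B$ differs from a diagonal matrix only by $e_{h,h+1}$, applying $e_B$ to this monomial amounts to moving a single $v_h$ into one of the $a_{h+1,j}$ ``slots'' currently occupied by $v_{h+1}$'s, summed over the choice of target column $j \in I_{m|n}$.

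Next I would identify, for each column $j$ with $a_{h+1,j} \geq 1$, the resulting monomial with the defining expression for $e_{A+e_{hj}-e_{h+1,j}}(v_\eta)$. This identification is delicate: the coset sum $\sum_{w\in \mathcal{D}_{\mathrm{ro}(B)\cap\mu g_B^{-1}}} \v^{-2\ell(w)} T_w$ occurring in $e_B$ factors, using Lemmas \ref{2.6} and \ref{2.8}, as the product of a short ``moving'' sum $(1 + T_{s_{a+1}} + \cdots + T_{s_{a+1}\cdots s_{a+b}})$ with the coset sums already present in $e_A$. Applying these two lemmas repeatedly reassembles the sum into exactly $\sum_{w\in\mathcal{D}_{\mathrm{ro}(B)\cap\mu g'^{-1}}} \v^{-2\ell(w)} T_w$ where $g'$ is the distinguished representative for $A+e_{hj}-e_{h+1,j}$. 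The quantum-integer coefficients $[[a_{hj}+1]]_{\v_h}$ arise from collecting $\v^{-2\ell(w)}$ weights of those $w$ in $\mathcal{D}_{\mathrm{ro}(B)\cap\mu g'^{-1}}$ that permute the newly inserted $v_h$ past the $a_{hj}$ pre-existing $v_h$'s in the same column block (together with the parabolic relation $T_{s_i}^2 = (\v^2-1)T_{s_i} + \v^2$ for even simple reflections, or the odd analogue $T_{s_m}^2 = -1$ when passing an odd entry). The $\v$-powers $\v^{\pm 2\sum_{y \lessgtr j} a_{h\ast,y}}$ in $f'_j$ and $g'_j$ encode the length of the minimal representative that moves the new $v_h$ past the intervening columns.

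The main obstacle will be bookkeeping the super-sign factors $(-1)^{\hat{i}_k \hat{i}_{k+1}}$ and the case $1 \leq i_k = i_{k+1} \leq m$ versus $m+1 \leq i_k = i_{k+1}$ in the right $\mathbf{H}$-action on $V(m|n)^{\otimes d}$. These signs depend both on the parities of the columns being traversed and on the parity of $h$ itself, and it is precisely these signs that obstruct the naive transpose-and-use-anti-automorphism route from Proposition \ref{right}. I expect the proof to split into three parity cases, $h<m$, $h=m$, and $h>m$, with the $h=m$ case being the most intricate: there the sign $(-1)^{\sum_{y>j}\hat{j}\hat{y} a_{my} + \sum_{y<j}(1+\hat{j}\hat{y}) a_{m+1,y}}$ in $f'_j(\v;A,m)$ will arise from combining the monomial sign $(-1)^{\sum a_{ij} a_{kl}}$ of the defining formula $v_\rho T_{g^{-1}} = \ldots$ with the mixed-parity transposition signs that occur when the new $v_m$ crosses the $v_{m+1}$'s sitting in columns $y < j$. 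Once these signs are reconciled case by case against the stated formulas for $f'_j$ and $g'_j$, the proof is complete, and the $e_C e_A$ case follows by exchanging the roles of $h$ and $h+1$ throughout.
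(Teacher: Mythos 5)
Your plan reproduces the paper's own proof essentially step for step: evaluate the product on a single weight vector in the tensor space $V(m|n)^{\otimes d}$, use the signed row-major monomial formula for $v_\rho T_{g^{-1}}$, reassemble the parabolic coset sums via Lemmas \ref{2.6} and \ref{2.8} so that the shuffle of the newly inserted $v_h$ among the $a_{hj}$ existing ones yields the factor $[[a_{hj}+1]]_{\v_h}$, and split into the three cases $h<m$, $h=m$, $h>m$ to track the super-signs and $\v$-powers (with $e_Ce_A$ handled symmetrically). One correction to your setup: under the hypothesis $\mathrm{co}(B)=\mathrm{ro}(A)$ the composite $e_Be_A$ is supported on $v_{\mathrm{ro}(B)}\mathbf{H}$, so you must evaluate on $v_{\mathrm{ro}(B)}$ and apply $e_B$ first — exactly as the paper computes $(v_\mu)e_Be_A$ with $\mu=\mathrm{ro}(B)$ — rather than on $v_\eta$ with $\eta=\mathrm{co}(A)$ applying $e_A$ first, which as written would give zero.
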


\begin{proof}
In fact, this proposition is essentially a reformulation of \cite[Theorem~4.1]{DGZ18}, in which the formulas are given in terms of the basis $\{\phi_A\}$. So one can prove the proposition directly by finding the relationship between the basis $\{e_A\}$ and the basis $\{\phi_A\}$. 
To avoid introducing the notation from \cite{DGZ18} in detail, here we provide an alternative argument based on Proposition~\ref{left} by using \eqref{eq:taueA} and 
$e_{B}e_{A}=\tau(\tau(e_A)\tau(e_B))$. This method was already used by Du, Gu and the third author to reprove \cite[Lemma~3.1]{DG14} through \cite[Theorem~4.1 \& Corollary~4.3]{DGZ18}.

Consider, for example, the case where $B-e_{m,m+1}$ is diagonal. Write $A_j=A+e_{mj}-e_{m+1,j}\in M(m|n,d)$. Note that 
\begin{equation*}
\widehat{A_j^{T}}-\widehat{A^{T}}=\sum_{x>m,y<j}a_{xy}\quad \text{and}\quad
\widehat{A_j}-\widehat{A}=\hat{j}(\sum_{y<j}a_{my}-\sum_{j<y<m+1}a_{m+1,y}).
\end{equation*}
Hence,
\begin{align*}
e_{B}e_{A}=&(-1)^{\widehat{A^{T}}+\widehat{A}}\tau(e_{A^T}e_{B^T})\\
=&\sum_{{j\in I_{m|n}}}(-1)^{\widehat{A^{T}}+\widehat{A}+\widehat{A_j^{T}}+\widehat{A_j}}(-1)^{\sum_{\scalebox{0.7}{$\substack{x>m\\y<j}$}}a_{xy}}\v^{-2\sum_{y<j}a_{my}}\v^{2\sum_{y>j}a_{my}}\llbracket a_{mj}+1\rrbracket_{\v_{m}}e_{A_j}\\
=&\sum_{{j\in I_{m|n}}}(-1)^{\sum_{y>j}\hat{j}\hat{y}a_{my}+\sum_{y<j}(1+\hat{j}\hat{y})a_{m+1,y}}\v^{-2\sum_{y<j}a_{m+1,y}}\v^{2\sum_{y>j}a_{my}}\llbracket a_{mj}+1\rrbracket_{\v_{m}}e_{A_j}. 
\end{align*}

The proof for other cases is similar.
\end{proof}

For $A=(a_{ij})\in M(m|n,d)$, let
$$[A]=(-1)^{d'(A)+h(A)}\v^{-2d(A)}\frac{\tau(e_{A^T})}{\prod_{(i,j)}([a_{ij}][a_{ij}-1]\cdots[1])},\quad\mbox{where}$$
\begin{gather*}
d(A)=\sum_{\substack{i>k\\j<l}}a_{ij}a_{kl}+\sum_{j<l}(-1)^i a_{ij}a_{il}, \quad \mbox{and}\\ 
d'(A)=\sum_{\substack{i>k>m\\j<m,j<l}}a_{ij}a_{kl}+\sum_{\substack{i<k<m\\j>l>m}}a_{ij}a_{kl}+\frac{(\sum_{i\leq m<j} a_{ij}-1)\sum_{i\leq m<j} a_{ij}}{2}.
\end{gather*} It is clear that $\{[A] \mid A\in M(m|n,d)\}$ is also a $\mathbb{C}(\v)$-basis of $\mathbf{S}$.

Using $[A]$ instead of $e_A$, we rewrite Propositions~\ref{right} \& \ref{left} as follows.
\begin{proposition}\label{3.6}
For $A,B,C\in M(m|n,d),~h\in[1,m+n)$, the following multiplication formulas hold.
\begin{itemize}
\item[(1)] If $B-e_{h,h+1}$ is diagonal, $\mathrm{co}(A)=\mathrm{ro}(B)$, then
\begin{align*}
\mathop{[A][B]}\limits_{(h\neq m)}
=&\sum_{\substack{i\in I_{m|n}\\a_{ih}\geq1}}(-1)^{(\hat{i}+\widehat{h+1})(\sum_{x>i}\tilde{a}_{xh}+\sum_{x<i}\tilde{a}_{x,h+1})}\\
&\times \v_{h+1}^{\sum_{x<i}(a_{x,h+1}-a_{xh})}[a_{ih}][A-e_{ih}+e_{i,h+1}],\\
\mathop{[A][B]}\limits_{(h=m)}
=&\sum_{\substack{i\in I_{m|n}\\a_{ih}\geq1}}(-1)^{\sum_{(x,y)>(i,m)}\tilde{a}_{xy}+(\hat{i}+1)(\sum_{x>i}\tilde{a}_{xm}+\sum_{x<i}\tilde{a}_{x,m+1}+1)}\\
&\times \v_{m+1}^{\sum_{x<i}(a_{x,m+1}+a_{xm})}[a_{im}][A-e_{im}+e_{i,m+1}].
\end{align*}
\item[(2)] If $C-e_{h+1,h}$ is diagonal, $\mathrm{co}(A)=\mathrm{ro}(C)$, then
\begin{align*}
\mathop{[A][C]}_{(h\neq m)}
=&\sum_{\substack{i\in I_{m|n}\\a_{i,h+1}\geq1}}(-1)^{(\hat{i}+\hat{h})(\sum_{x>i}\tilde{a}_{xh}+\sum_{x<i}\tilde{a}_{x,h+1})}\\
&\times \v_{h}^{\sum_{x>i}(a_{xh}-a_{x,h+1})}[a_{i,h+1}][A+e_{ih}-e_{i,h+1}],\\
\mathop{[A][C]}_{(h=m)}
=&\sum_{\substack{i\in I_{m|n}\\a_{i,m+1}\geq1}}(-1)^{\sum_{(x,y)>(i,m+1)}\tilde{a}_{xy}+\hat{i}(\sum_{x>i}\tilde{a}_{xm}+\sum_{x<i}\tilde{a}_{x,m+1}+1)}\\
&\times \v_{m}^{\sum_{x>i}a_{xm}+a_{x,m}}[a_{i,m+1}][A+e_{im}-e_{i,m+1}].
\end{align*}
\end{itemize}
\end{proposition}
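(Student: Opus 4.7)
The plan is a direct substitution, converting the $e_A$-basis multiplication formulas of Proposition~\ref{right} into the normalized $[A]$-basis by absorbing the factors $(-1)^{d'(A)}\v^{-2d(A)}/\prod [a_{ij}]!$. Since $B$ (resp.\ $C$) is a diagonal matrix plus one off-diagonal $1$, one checks immediately that $d(B)=d'(B)=0$ and $\prod[b_{ij}]!=1$, so $[B]=e_B$ and $[C]=e_C$. Consequently
\begin{equation*}
[A][B]=(-1)^{d'(A)}\v^{-2d(A)}\Big(\prod[a_{ij}]!\Big)^{-1}e_A e_B,
\end{equation*}
and likewise for $[A][C]$. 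Substituting Proposition~\ref{right} and rewriting each $e_{A-e_{ih}+e_{i,h+1}}$ in the $[\,\cdot\,]$-basis reduces the problem to tracking three things for each index $i$: the sign change $d'(A')-d'(A)$, the $\v$-shift $-2(d(A')-d(A))$, and the factorial ratio, where $A'=A-e_{ih}+e_{i,h+1}$.

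First I would compute the factorial quotient $\tfrac{\prod[a_{ij}]!}{\prod[a'_{ij}]!}=\tfrac{[a_{ih}]!\,[a_{i,h+1}]!}{[a_{ih}-1]!\,[a_{i,h+1}+1]!}=\tfrac{[a_{ih}]}{[a_{i,h+1}+1]}$ and combine it with the coefficient $[[a_{i,h+1}+1]]_{\v_{h+1}}$ from Proposition~\ref{right}. Using $[[a]]_{\v_{h+1}}=\v_{h+1}^{a-1}[a]$ (together with the symmetry $[a]_{\v^{-1}}=[a]$), one gets a clean factor $\v_{h+1}^{a_{i,h+1}}\,[a_{ih}]$, which absorbs into the $\v_{h+1}$-power of the target formula. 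Next I would expand $d(A')-d(A)$ as a telescoping sum coming from pairs $(x,y)$ with $y\in\{h,h+1\}$; the piece $\sum_{i>k,j<l}a_{ij}a_{kl}$ contributes the ``$x<i$'' sums and the piece $\sum_{j<l}(-1)^i a_{ij}a_{il}$ accounts for the $x>i$ sums, eventually yielding the exponent $\sum_{x<i}a_{x,h+1}-a_{xh}$ predicted in Proposition~\ref{3.6}.

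The sign factor is the most delicate piece. One has to combine three separate contributions: the prefactor $f_i(\v;A,h+1)$ from Proposition~\ref{right}, the difference $d'(A')-d'(A)$, and parity signs extracted from the degree shift for $h>m$. For $h\neq m$ these merge into $(-1)^{(\hat i+\widehat{h+1})(\sum_{x>i}(\hat x+\hat h)a_{xh}+\sum_{x<i}(\hat x+\widehat{h+1})a_{x,h+1})}$ via a bookkeeping that separates even--even, odd--odd and mixed blocks according to whether $i\leq m$ or $i>m$. The case $h=m$ is the main obstacle: $f_i(\v;A,m+1)$ carries its own sign $(-1)^{\sum_{x<i,y>m}a_{xy}}$, and $d'(A)$ contains the nontrivial triangular term $\tfrac{(\sum_{i\leq m<j}a_{ij}-1)\sum_{i\leq m<j}a_{ij}}{2}$ whose difference under $A\mapsto A'$ depends on the parity of $i$; showing that all three sources collapse to the single super-sign $(-1)^{\sum_{(x,y)>(i,m)}(\hat x+\hat y)a_{xy}+(\hat i+1)(\sum_{x>i}\hat x a_{xm}+\sum_{x<i}(\hat x+1)a_{x,m+1})}$ is where one must be most careful.

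The analogous computation for $[A][C]$ proceeds by exactly the same recipe, starting from the second multiplication formula in Proposition~\ref{right}: replace $h$-column and $(h+1)$-column by their mirror roles, use the ratio $[a_{i,h+1}]/[a_{ih}+1]$, and apply $[[a_{ih}+1]]_{\v_h}=\v_h^{a_{ih}}[a_{ih}+1]$ to produce the $\v_h$-exponent. The super-sign in the case $h=m$ for $[A][C]$ is obtained by the same block decomposition argument, completing the proof. No appeal to Proposition~\ref{left} is needed here, since Proposition~\ref{3.6} concerns right multiplication by simple generators.
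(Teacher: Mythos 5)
Your strategy coincides with the paper's own: Proposition~\ref{3.6} is presented there with no argument beyond the remark that it is Proposition~\ref{right} rewritten in the basis $\{[A]\}$, and your plan --- convert $[[a_{i,h+1}+1]]_{\v_{h+1}}$ into a balanced quantum integer, absorb the factorial ratio, and track the differences $d(A')-d(A)$ and $d'(A')-d'(A)$ --- is exactly that change-of-basis computation. You are also right that Proposition~\ref{left} plays no role here.

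Two points deserve attention. First, your claim that $d(B)=0$ and $\prod[b_{ij}]!=1$, hence $[B]=e_B$, does not follow from the paper's stated definitions: writing $B=\mathrm{diag}(\mu)+e_{h,h+1}$, the same-row term $\sum_{j<l}(-1)^{i}a_{ij}a_{il}$ in $d$ receives the contribution $(-1)^{\hat{h}}\mu_h\,b_{h,h+1}=(-1)^{\hat{h}}(\sum_x a_{xh}-1)$ from the pair of entries at $(h,h)$ and $(h,h+1)$, and $\prod[b_{ij}]!=\prod_x[\mu_x]!$ if diagonal entries enter the product. (The paper's own assertion $[\mathrm{diag}(\lambda)]^2=[\mathrm{diag}(\lambda)]$ forces the factorial product to run over off-diagonal positions only, but nothing in the text excludes the diagonal pair from $d$.) You must either justify an off-diagonal reading of $d$ or carry the resulting global factor $\v^{\mp 2(\sum_x a_{xh}-1)}$ through the computation and show where it cancels; as written, this step is a gap. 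Second, the argument stays at the level of an outline precisely where the work is: the collapse of the three sign sources at $h=m$ is announced (``this is where one must be most careful'') but not carried out, and likewise the telescoping of $d(A')-d(A)$ is only described. Since the paper supplies no verification either, this is not a defect relative to the paper, but as a standalone proof the proposal is incomplete at exactly those points.
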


\begin{proposition}\label{standard}
For $A,B,C\in M(m|n,d),~h\in[1,m+n)$, the following multiplication formulas hold.
\begin{itemize}
\item[(1)] If $B-e_{h,h+1}$ is diagonal, $\mathrm{co}(B)=\mathrm{ro}(A)$, then
\begin{align*}
\mathop{[B][A]}\limits_{(h\neq m)}=&\sum_{\substack{j\in I_{m|n}\\a_{h+1,j}\geq1}}\v_{h}^{\sum_{y>j}a_{hy}-{a_{h+1,y}}}[a_{h+1,j}][A+e_{hj}-e_{h+1,j}],\\
\mathop{[B][A]}\limits_{(h=m)}=&\sum_{\substack{j\in I_{m|n}\\a_{m+1,j}\geq1}}(-1)^{\sum_{(x,y)<(m+1,j)}\tilde{a}_{xy}+\hat{j}}\v_{m}^{\sum_{y>j}a_{my}+{a_{m+1,y}}}[a_{m+1,j}][A+e_{mj}-e_{m+1,j}].
\end{align*}
\item[(2)] If $C-e_{h+1,h}$ is diagonal, $\mathrm{co}(C)=\mathrm{ro}(A)$, then
\begin{align*}
\mathop{[C][A]}\limits_{(h\neq m)}=&\sum_{\substack{j\in I_{m|n}\\a_{hj}\geq1}}\v_{h+1}^{\sum_{y<j}a_{h+1,y}-{a_{hy}}}[a_{hj}][A-e_{hj}+e_{h+1,j}],\\
\mathop{[C][A]}\limits_{(h=m)}=&\sum_{\substack{j\in I_{m|n}\\a_{mj}\geq1}}(-1)^{\sum_{(x,y)<(m,j)}\tilde{a}_{xy}+\hat{j}+1}\v_{m+1}^{\sum_{y<j}a_{m+1,y}+{a_{my}}}[a_{mj}][A-e_{mj}+e_{m+1,j}].
\end{align*}
\end{itemize}
\end{proposition}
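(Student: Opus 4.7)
The plan is to derive Proposition \ref{standard} from Proposition \ref{left} by a change of basis. Substituting the definition
\[
e_A = (-1)^{d'(A)}\v^{2d(A)}\Bigl(\prod_{(i,j)}[a_{ij}]!\Bigr)[A]
\]
into both sides of the identity $e_Be_A = \sum_j f'_j(\v;A,h)[[a_{hj}+1]]_{\v_h}\,e_{A+e_{hj}-e_{h+1,j}}$ and solving for $[B][A]$ produces an expression of the shape
\[
[B][A] = \sum_j f'_j(\v;A,h)\,[[a_{hj}+1]]_{\v_h}\,(-1)^{\Delta d'}\,\v^{2\Delta d}\,R\,[A'],
\]
where $A' = A+e_{hj}-e_{h+1,j}$, $R$ is the factorial ratio $\prod[a'_{ij}]!/(\prod[b_{ij}]!\prod[a_{ij}]!)$, $\Delta d = d(A')-d(A)-d(B)$, and $\Delta d' = d'(A')-d'(A)-d'(B)$. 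The work consists in verifying that the product of the explicit quantities $f'_j$, $[[\cdot]]_{\v_h}$, $(-1)^{\Delta d'}$, $\v^{2\Delta d}$ and $R$ reduces to the coefficient appearing in Proposition \ref{standard}.

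First I would simplify the factorial and quantum-integer part. Since $A'$ differs from $A$ only at positions $(h,j)$ and $(h+1,j)$, the ratio $\prod[a'_{ij}]!/\prod[a_{ij}]!$ collapses to $[a_{hj}+1]/[a_{h+1,j}]$. The matrix $B$ contributes $\prod_i[b_{ii}]!$ since its single off-diagonal entry $b_{h,h+1}=1$ contributes $[1]!=1$, and these diagonal factorials cancel portions of $\prod[a_{ij}]!$ through $\mathrm{co}(B) = \mathrm{ro}(A)$. Using $[[a_{hj}+1]]_{\v_h} = \v_h^{a_{hj}}[a_{hj}+1]$, the overall factorial/quantum-integer contribution reduces to $[a_{h+1,j}]$ together with a pure power of $\v_h$. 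Next I would evaluate $\Delta d$ by expanding its two pieces $\sum_{i>k,\,j<l}a_{ij}a_{kl}$ and $\sum_{j<l}(-1)^ia_{ij}a_{il}$: shifting one unit of mass from $(h+1,j)$ to $(h,j)$ alters the first sum by terms controlled by $\sum_{y>j}a_{hy}$ and $\sum_{y<j}a_{h+1,y}$, and the secondary sum yields a matching correction, so after combining with the previously-extracted $\v_h$ factor the exponent collapses to $\sum_{y>j}a_{hy}-a_{h+1,y}$ in the $h\neq m$ case.

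The main obstacle is the $h=m$ case, where $d'(A)$ involves three distinct summands including the triangular-number term $\binom{\sum_{i\leq m<j}a_{ij}}{2}$. Moving one unit of odd mass from $(m+1,j)$ to $(m,j)$ alters this triangular quantity together with the two quadratic pieces in an interrelated fashion, and the intrinsic sign $f'_j(\v;A,m)$ from Proposition \ref{left} must combine with $(-1)^{\Delta d' + d'(B)}$ to produce the target sign $(-1)^{\sum_{(x,y)<(m+1,j)}(\hat{x}+\hat{y})a_{xy}}$. The verification requires careful bookkeeping of the parities of $j$ and of the partial column sums $\sum_{y<j}a_{m+1,y}$ and $\sum_{y>j}a_{my}$, which must cancel pairwise except for the lexicographically-indexed sum. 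Once this identity is established, the subcases $h<m$ and $h>m$ follow by the same argument with the super-signs trivialised, and the parallel computation starting from the second formula $e_Ce_A = \sum_j g'_j(\v;A,h+1)[[a_{h+1,j}+1]]_{\v_{h+1}}e_{A-e_{hj}+e_{h+1,j}}$ in Proposition \ref{left} yields the formulas for $[C][A]$, completing the proof.
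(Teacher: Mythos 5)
Your overall strategy --- obtaining Proposition \ref{standard} from Proposition \ref{left} by substituting the relation between $e_A$ and $[A]$ and tracking the factorial ratio, the power of $\v$ and the sign --- is exactly what the paper does: Proposition \ref{standard} is presented there as a rewriting of Proposition \ref{left} in the basis $\{[A]\}$, with no further argument. The problem is that the central verification, as you set it up, does not close. With your substitution $e_A=(-1)^{d'(A)}\v^{2d(A)}\bigl(\prod_{(i,j)}[a_{ij}]!\bigr)[A]$ (the literal inversion of the printed definition), the coefficient of $[A']$, where $A'=A+e_{hj}-e_{h+1,j}$, is $f'_j\cdot[[a_{hj}+1]]_{\v_h}\cdot R$ with $R=\prod[a'_{ij}]!\big/\bigl(\prod[b_{ij}]!\prod[a_{ij}]!\bigr)$. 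Since $[[a_{hj}+1]]_{\v_h}=\v_h^{a_{hj}}[a_{hj}+1]$ and, as you correctly note, $\prod[a'_{ij}]!/\prod[a_{ij}]!=[a_{hj}+1]/[a_{h+1,j}]$, this product is $\v_h^{a_{hj}}[a_{hj}+1]^2\big/\bigl([a_{h+1,j}]\prod[b_{ij}]!\bigr)$ times a monomial --- not $[a_{h+1,j}]$ times a power of $\v_h$, and not even a Laurent polynomial in general. The quantum integer $[a_{h+1,j}]$ emerges only if the factorial ratio enters reciprocally, i.e.\ if $e_A=\gamma_A[A]/\prod[a_{ij}]!$ for a monomial $\gamma_A$, so that the conversion factor is $\prod[a_{ij}]!\prod[b_{ij}]!/\prod[a'_{ij}]!=\prod[b_{ij}]!\,[a_{h+1,j}]/[a_{hj}+1]$, which cancels the $[a_{hj}+1]$ coming from $[[a_{hj}+1]]_{\v_h}$. (A one-line test with $a_{h+1,j}=2$ and all other entries of rows $h,h+1$ zero already distinguishes the two conventions.) So the normalization must be read with the factorial product multiplying $e_A$ rather than dividing it; carrying your stated substitution through yields a coefficient that contradicts Proposition \ref{standard}, and the claimed reduction to $[a_{h+1,j}]$ is false as written.

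Second, your disposal of $\prod_i[b_{ii}]!$ is not right. The condition $\mathrm{co}(B)=\mathrm{ro}(A)$ forces $b_{ii}$ to equal a row sum $\sum_y a_{iy}$ of $A$ (shifted by one in one row), and $\bigl[\sum_y a_{iy}\bigr]!$ does not ``cancel portions of $\prod[a_{ij}]!$'': these are genuinely different quantities, so the factor $\prod_i[b_{ii}]!$ would survive and spoil the formula. It must instead be absent altogether, which happens only if the factorial product in the normalization runs over off-diagonal entries --- this is also what the idempotency $[\mathrm{diag}(\lambda)]^2=[\mathrm{diag}(\lambda)]$ used in \S\ref{sev:V} forces --- in which case $B$ contributes only $[1]!=1$ from its single off-diagonal entry. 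Once these two points are repaired, the remainder of your plan (computing $d(A')-d(A)-d(B)$ and $d'(A')-d'(A)-d'(B)$ and matching the result against $f'_j$, with the extra care you describe for the triangular term in $d'$ when $h=m$) is the right computation, and the parallel argument from the $e_Ce_A$ formula gives part (2).
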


\subsection{The $(\mathbf{S}_{\v}(k|l,d),\mathbf{S}_{\v}(r|s,d))$-moudle $\mathcal{V}^{k|l}_{r|s}(d)$} \label{sev:V}

Denote
\begin{equation*}
M(k|l,r|s;d)=\{\jmath(\lambda,g,\rho) \mid \rho\in\tilde{\Lambda}(k|l,d),~\lambda\in\tilde{\Lambda}(r|s,d),~g\in\mathcal{D}^{\circ}_{\lambda\rho}\},
\end{equation*}
where
\begin{equation*}
\tilde{\Lambda}(a+b,d)=\{\lambda=(0,\ldots,0,\lambda_{m-a+1},\ldots,\lambda_{m},\lambda_{m+1},\ldots,\lambda_{m+b},0,\ldots,0)\in\Lambda(m+n,d)\}.
\end{equation*}
Note that $M(k|l,r|s)=\bigcup_{d\geq 1}M(k|l,r|s;d)$.

For $\lambda\neq\mu\in{\Lambda(m+n,d)}$, note that $[\mathrm{diag}(\lambda)][\mathrm{diag}(\mu)]=0$ and $[\mathrm{diag}(\lambda)]^2=[\mathrm{diag}(\lambda)]$. That is, $\{[\mathrm{diag}(\lambda)]\mid \lambda\in\Lambda(m+n,d)\}$ forms a set of orthogonal idempotent elements. 

Denote $\xi_{k|l}:=\sum_{\lambda\in\tilde{\Lambda}(k|l,d)}[\mathrm{diag}(\lambda)]$. We have $$\mathbf{S_{\v}}(k|l,d)\simeq\xi_{k|l}\mathbf{S_{\v}}(m|n,d)\xi_{k|l}.$$ Moreover, let $$\mathcal{V}^{k|l}_{r|s}(d):=\xi_{k|l}\mathbf{S_{\v}}(m|n,d)\xi_{r|s},$$ 
which has a $\mathbb{C}(\v)$-basis $\{[A] \mid A\in M(k|l,r|s;d)\}$. It is clear that $\mathcal{V}^{k|l}_{r|s}(d)$ admits a left $\mathbf{S}_{\v}(k|l,d)$-action and a right $\mathbf{S}_{\v}(r|s,d)$-action. These two actions commute naturally, as they are both induced by the multiplication of the larger algebra $\mathbf{S_{\v}}(m|n,d)$.

\begin{proposition}\label{3.8}
Let $A=(a_{ij})\in M(k|l,r|s;d)$ and $B,C\in M(r|s,d)$ satisfy that $\mathrm{co}(A)=\mathrm{ro}(B)=\mathrm{ro}(C)$ and $B-e_{h,h+1}$ and $C-e_{h+1,h}$ are diagonal for some $h\in\tilde{I}_{k|l}$.
The right action of $\mathbf{S}_{\v}(r|s,d)$ on $\mathcal{V}^{k|l}_{r|s}(d)$ is given by
\begin{align*}
\mathop{[A][B]}\limits_{(h\neq m)}
=&\sum_{\substack{i\in\tilde{I}_{k|l}\\a_{ih}\geq1}}(-1)^{(\hat{i}+\widehat{h+1})(\sum_{x>i}\tilde{a}_{xh}+\sum_{x<i}\tilde{a}_{x,h+1})} \v_{h+1}^{\sum_{x<i}(a_{x,h+1}-a_{xh})}[a_{ih}][A-e_{ih}+e_{i,h+1}],\\
\mathop{[A][B]}\limits_{(h=m)}
=&\sum_{\substack{i\in\tilde{I}_{k|l}\\a_{im}\geq1}}(-1)^{\sum_{(x,y)>(i,m)}\tilde{a}_{xy}+(\hat{i}+1)(\sum_{x>i}\tilde{a}_{xm}+\sum_{x<i}\tilde{a}_{x,m+1})}\\& \qquad \times \v_{m+1}^{\sum_{x<i}(a_{x,m+1}+a_{xm})} [a_{im}][A-e_{im}+e_{i,m+1}];
\end{align*}
\begin{align*}
\mathop{[A][C]}_{(h\neq m)}
=&\sum_{\substack{i\in\tilde{I}_{k|l}\\a_{i,h+1}\geq1}}(-1)^{(\hat{i}+\hat{h})(\sum_{x>i}\tilde{a}_{xh}+\sum_{x<i}\tilde{a}_{x,h+1})} \v_{h}^{\sum_{x>i}(a_{xh}-a_{x,h+1})}[a_{i,h+1}][A+e_{ih}-e_{i,h+1}],\\
\mathop{[A][C]}_{(h=m)}
=&\sum_{\substack{i\in\tilde{I}_{k|l}\\a_{i,m+1}\geq1}}(-1)^{\sum_{(x,y)>(i,m+1)}\tilde{a}_{xy}+\hat{i}(\sum_{x>i}\tilde{a}_{xm}+\sum_{x<i}\tilde{a}_{x,m+1})}\\
&\qquad \times \v_{m}^{\sum_{x>i}(a_{xm}+a_{x,m+1})}[a_{i,m+1}][A+e_{im}-e_{i,m+1}].
\end{align*} 
\end{proposition}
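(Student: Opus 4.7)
The plan is to reduce the statement to Proposition~\ref{3.6} by exploiting the realization
$$\mathcal{V}^{k|l}_{r|s}(d)=\xi_{k|l}\mathbf{S_{\v}}(m|n,d)\xi_{r|s}\subset\mathbf{S_{\v}}(m|n,d),$$
together with the identifications $\mathbf{S}_{\v}(a|b,d)\simeq\xi_{a|b}\mathbf{S}_{\v}(m|n,d)\xi_{a|b}$ for $(a,b)=(k,l)$ and $(r,s)$. Under these identifications, the right $\mathbf{S}_{\v}(r|s,d)$-action on $\mathcal{V}^{k|l}_{r|s}(d)$ is nothing more than the restriction of the right multiplication in the ambient algebra $\mathbf{S}_{\v}(m|n,d)$. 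In particular, the generator-like elements $[B]$ and $[C]$ appearing in the statement can be viewed as $\xi_{r|s}[B]\xi_{r|s}$ and $\xi_{r|s}[C]\xi_{r|s}$ in $\mathbf{S}_{\v}(m|n,d)$, and computing their action on $[A]$ is the same as computing the products $[A][B]$ and $[A][C]$ in the larger Schur superalgebra.

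Concretely, I would first apply Proposition~\ref{3.6}(1) and (2) verbatim to $[A][B]$ and $[A][C]$, producing sums indexed by $i\in I_{m|n}$ with summands weighted by $[a_{ih}]$ (resp. $[a_{i,h+1}]$). The crucial observation is that since $A\in M(k|l,r|s;d)$, every entry $a_{xy}$ of $A$ vanishes outside the block $\tilde{I}_{k|l}\times\tilde{I}_{r|s}$; in particular $[a_{ih}]=0=[a_{i,h+1}]$ whenever $i\notin\tilde{I}_{k|l}$. Hence every summand with such $i$ contributes zero, and the $i$-sums collapse exactly onto the ranges stated in Proposition~\ref{3.8}. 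The sign and $\v$-power prefactors from Proposition~\ref{3.6} involve further sums over $x\in I_{m|n}$ of $a_{xh}$ and $a_{x,h+1}$, but by the same support argument only terms with $x\in\tilde{I}_{k|l}$ contribute, so those prefactors already agree term-by-term with the ones advertised in Proposition~\ref{3.8}.

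It remains to verify that each surviving matrix $A'=A\pm e_{ih}\mp e_{i,h+1}$ still lies in $M(k|l,r|s;d)$, so that $[A']\in\mathcal{V}^{k|l}_{r|s}(d)$ and the right-hand sides of the claimed formulas are indeed elements of $\mathcal{V}^{k|l}_{r|s}(d)$. This is automatic: for $i\in\tilde{I}_{k|l}$ and $h,h+1\in\tilde{I}_{r|s}$ the modification takes place entirely inside the prescribed block, so the row and column sums of $A'$ still restrict to $\tilde{\Lambda}(k|l,d)$ and $\tilde{\Lambda}(r|s,d)$ respectively, and one checks that the underlying double-coset representative still satisfies the even-odd trivial intersection condition (nothing is changed in the even/odd partitions of the rows or columns).

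I do not foresee any genuine obstacle. In effect, Proposition~\ref{3.8} is a direct corollary of Proposition~\ref{3.6}: the novel content is merely the observation that the support of $A$ automatically truncates every sum and every exponent in the ambient multiplication formula to the relevant sub-indices $\tilde{I}_{k|l}$ and $\tilde{I}_{r|s}$. The mildest delicate point is to confirm that the idempotent $\xi_{r|s}$ on the right of $[B]$ and $[C]$ does not interfere, i.e.\ that $[A][B]\xi_{r|s}=[A][B]$; this follows from $\mathrm{co}(A')\in\tilde{\Lambda}(r|s,d)$ and Lemma~\ref{3.1}, which gives $[A']\xi_{r|s}=[A']$ for each $A'$ surviving the truncation.
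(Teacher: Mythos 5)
Your proposal is correct and matches the paper's (implicit) argument exactly: the paper states Proposition~\ref{3.8} without proof, as an immediate restriction of the ambient multiplication formulas in Proposition~\ref{3.6} to $\mathcal{V}^{k|l}_{r|s}(d)=\xi_{k|l}\mathbf{S}_{\v}(m|n,d)\xi_{r|s}$, using precisely the support truncation you describe. Your extra checks (that $A\pm e_{ih}\mp e_{i,h+1}$ stays in $M(k|l,r|s;d)$ and that the idempotent $\xi_{r|s}$ acts trivially via Lemma~\ref{3.1}) are the right ones and complete the reduction.
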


\begin{proposition}\label{3.7}
Let $A\in M(k|l,r|s;d)$ and $B,C\in M(k|l,d)$ be such that $\mathrm{co}(B)=\mathrm{co}(C)=\mathrm{ro}(A)$ and $B-e_{h,h+1}$ and $C-e_{h+1,h}$ are diagonal for some $h\in\tilde{I}_{r|s}$. The left action of $\mathbf{S}_{\v}(k|l,d)$ on $\mathcal{V}^{k|l}_{r|s}(d)$ is given by
\begin{align*}
\mathop{[B][A]}\limits_{(h\neq m)}=&\sum_{\substack{j\in\tilde{I}_{r|s}\\a_{h+1,j}\geq1}}\v_{h}^{\sum_{y>j}a_{hy}-{a_{h+1,y}}}[a_{h+1,j}][A+e_{hj}-e_{h+1,j}],\\
\mathop{[B][A]}\limits_{(h=m)}=&\sum_{\substack{j\in\tilde{I}_{r|s}\\a_{m+1,j}\geq1}}(-1)^{\sum_{(x,y)<(m+1,j)}\tilde{a}_{xy}}\v_{m}^{\sum_{y>j}a_{my}+{a_{m+1,y}}}[a_{m+1,j}][A+e_{mj}-e_{m+1,j}];
\end{align*}
\begin{align*}
\mathop{[C][A]}\limits_{(h\neq m)}=&\sum_{\substack{j\in\tilde{I}_{r|s}\\a_{hj}\geq1}}\v_{h+1}^{\sum_{y<j}a_{h+1,y}-{a_{hy}}}[a_{hj}][A-e_{hj}+e_{h+1,j}],\\
\mathop{[C][A]}\limits_{(h=m)}=&\sum_{\substack{j\in\tilde{I}_{r|s}\\a_{mj}\geq1}}(-1)^{\sum_{(x,y)<(m,j)}\tilde{a}_{xy}}\v_{m+1}^{\sum_{y<j}a_{m+1,y}+{a_{my}}}[a_{mj}][A-e_{mj}+e_{m+1,j}].
\end{align*}
\end{proposition}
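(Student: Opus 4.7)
The plan is to derive Proposition~\ref{3.7} as an immediate specialization of the general multiplication formula in $\mathbf{S}_\v(m|n,d)$ established in Proposition~\ref{standard}. The essential observation is that, under the isomorphism $\mathbf{S}_\v(k|l,d) \simeq \xi_{k|l}\mathbf{S}_\v(m|n,d)\xi_{k|l}$ recalled just before the statement, the left $\mathbf{S}_\v(k|l,d)$-action on $\mathcal{V}^{k|l}_{r|s}(d) = \xi_{k|l}\mathbf{S}_\v(m|n,d)\xi_{r|s}$ is nothing but the restriction of left multiplication in the ambient Schur superalgebra. Thus, rather than redoing any Hecke algebra computation, I only need to interpret the existing formulas in the presence of the idempotents $\xi_{k|l}$ and $\xi_{r|s}$.

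First, given $B \in M(k|l,d)$ with $B - e_{h,h+1}$ diagonal, extend $B$ by zeros to a matrix in $M(m|n,d)$; the shape of $B$ forces $h,h+1 \in \tilde{I}_{k|l}$. Apply Proposition~\ref{standard}(1), and for $C$ apply Proposition~\ref{standard}(2), to expand $[B][A]$ and $[C][A]$ as sums over $j \in I_{m|n}$ of weighted terms $[A + e_{hj} - e_{h+1,j}]$ (respectively $[A - e_{hj} + e_{h+1,j}]$). Since $A \in M(k|l,r|s;d)$ is supported in $\tilde{I}_{k|l}\times\tilde{I}_{r|s}$, the quantum integer factor $[a_{h+1,j}]$ (resp.\ $[a_{hj}]$) vanishes whenever $j \notin \tilde{I}_{r|s}$, so the sum collapses automatically to $j \in \tilde{I}_{r|s}$. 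For each surviving $j$, the matrix $A \pm e_{hj} \mp e_{h+1,j}$ remains supported in $\tilde{I}_{k|l}\times\tilde{I}_{r|s}$ and therefore lies in $M(k|l,r|s;d)$, so the action closes on $\mathcal{V}^{k|l}_{r|s}(d)$ as required.

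Second, I verify that the $\v$-exponents $\sum_{y>j} a_{hy} - a_{h+1,y}$, $\sum_{y<j} a_{h+1,y} - a_{hy}$ and the super signs $\sum_{(x,y)<(m+1,j)}(\hat{x}+\hat{y})a_{xy}$, $\sum_{(x,y)<(m,j)}(\hat{x}+\hat{y})a_{xy}$ appearing in Proposition~\ref{standard} are unaffected by restricting the index range from $I_{m|n}$ to $\tilde{I}_{r|s}$. This is immediate because every omitted term corresponds to some $a_{xy}$ with $x \notin \tilde{I}_{k|l}$ or $y \notin \tilde{I}_{r|s}$, and all such entries are zero by hypothesis on $A$. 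Consequently the formulas for $[B][A]$ and $[C][A]$ in Proposition~\ref{standard} transfer verbatim, yielding the four formulas stated in Proposition~\ref{3.7}.

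The main obstacle therefore does not lie in this proof at all but was already resolved in establishing Proposition~\ref{standard}, whose derivation rested on the technical Hecke algebra identities in Lemmas~\ref{2.6} and~\ref{2.8} together with a delicate tracking of super signs arising from reorderings of basis vectors in $V(m|n)^{\otimes d}$. Once Proposition~\ref{standard} is in hand, Proposition~\ref{3.7} is a straightforward bookkeeping specialization and, in particular, requires no new sign analysis.
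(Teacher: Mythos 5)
Your proposal is correct and matches the paper's (implicit) argument: the paper states Proposition~\ref{3.7} without separate proof, precisely because it is the specialization of Proposition~\ref{standard} to $A\in M(k|l,r|s;d)$, where the idempotents $\xi_{k|l},\xi_{r|s}$ and the vanishing of $a_{h+1,j}$ (resp.\ $a_{hj}$) outside the allowed index range collapse the sum, exactly as you describe. You also correctly locate the real work in Proposition~\ref{standard} (equivalently Proposition~\ref{left}) and its Hecke-algebra lemmas.
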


\subsection{From $\mathbf{S}_{\v}(m|n,d)$ to $U_\v(\mathfrak{gl}_{m|n})$}



It was known in \cite{DG14} that there is a surjective homomorphism $U_\v(\mathfrak{gl}_{m|n})\to \mathbf{S}_\v(m|n,d)$ given by
\begin{align*}
&K_h\mapsto\sum_{\lambda\in\Lambda(m+n,d-1)}\v^{\lambda_h}[\mathrm{diag}(\lambda)],\quad K^{-1}_h\mapsto\sum_{\lambda\in\Lambda(m+n,d-1)}\v^{-\lambda_h}[\mathrm{diag}(\lambda)],\\
&E_{h,h+1}\mapsto\sum_{\lambda\in\Lambda(m+n,d-1)}[e_{h,h+1}+\mathrm{diag}(\lambda)],\quad E_{h+1,h}\mapsto\sum_{\lambda\in\Lambda(m+n,d-1)}[e_{h+1,h}+\mathrm{diag}(\lambda)].
\end{align*}
Thus, the $(\mathbf{S}_{\v}(k|l,d), \mathbf{S}_{\v}(r|s,d))$-bimodule $\mathcal{V}^{k|l}_{r|s}(d)$ is also a $(U_\v(\mathfrak{gl}_{k|l}),U_\v(\mathfrak{gl}_{r|s}))$-bimodule through the above surjective homomorphism. 

Denote $$\mathcal{V}^{k|l}_{r|s}:=\bigoplus_{d\geq0}\mathcal{V}^{k|l}_{r|s}(d),$$ 
which has a $\mathbb{C}(\v)$-basis $\{[A] \mid A\in M(k|l,r|s)\}$.
The left $U_\v(\mathfrak{gl}_{k|l})$-action and the right $U_\v(\mathfrak{gl}_{r|s})$-action on $\mathcal{V}^{k|l}_{r|s}$ are given in the following proposition.
\begin{proposition}\label{prop:4.11}
\item[(1)] The left action of $U_\v(\mathfrak{gl}_{k|l})$ on $\mathcal{V}^{k|l}_{r|s}$ is given by, for $h\in\tilde{I}_{r|s}$ and $[A]\in\mathcal{V}^{k|l}_{r|s}(d)$,
\begin{align*}
\mathop{E_{h,h+1}\cdot [A]}\limits_{(h\neq m)}=&\sum_{{j\in \tilde{I}_{s|r}}}\v_{h}^{\sum_{y>j}(a_{hy}-a_{h+1,y})}[a_{h+1,j}][A+e_{hj}-e_{h+1,j}],\\
E_{m,m+1}\cdot [A]=&\sum_{{j\in \tilde{I}_{s|r}}}(-1)^{\sum_{(x,y)>(m+1,j)}\tilde{a}_{xy}}\v_{m}^{\sum_{y<j}(a_{h+1,y}-a_{h,y})}[a_{m+1,j}][A+e_{mj}-e_{m+1,j}],\\
\mathop{E_{h+1,h}\cdot [A]}\limits_{(h\neq m)}=&\sum_{{j\in \tilde{I}_{s|r}}}\v_{h+1}^{\sum_{y<j}(a_{h+1,y}-a_{h,y})}[a_{hj}][A-e_{hj}+e_{h+1,j}],\\
E_{m+1,m}\cdot [A]=&\sum_{{j\in \tilde{I}_{s|r}}}(-1)^{\sum_{(x,y)>(m,j)}\tilde{a}_{xy}}\v_{m+1}^{\sum_{y>j}(a_{m+1,y}+a_{my})}[a_{mj}][A-e_{mj}+e_{m+1,j}].\end{align*}

\item[(2)] The right action of $U_\v(\mathfrak{gl}_{r|s})$ on $\mathcal{V}^{k|l}_{r|s}$ is given by, for $h\in\tilde{I}_{k|l}$ and $[A]\in\mathcal{V}^{k|l}_{r|s}(d)$,
\begin{align*}
\mathop{[A]\cdot E_{h,h+1}}\limits_{(h\neq m)}
=&\sum_{{i\in \tilde{I}_{k|l}}}(-1)^{(\hat{i}+\widehat{h+1})(\sum_{x>i}\tilde{a}_{xh}+\sum_{x<i}\tilde{a}_{x,h+1})}\v_{h+1}^{\sum_{x<i}(a_{x,h+1}-a_{xh})}[a_{ih}][A-e_{ih}+e_{i,h+1}],\\
[A]\cdot E_{m,m+1}
=&\sum_{{i\in \tilde{I}_{k|l}}}(-1)^{\sum_{(x,y)<(i,m)}\tilde{a}_{xy}+(\hat{i}+1)(\sum_{x>i}\tilde{a}_{xm}+\sum_{x<i}\tilde{a}_{x,m+1})}\\
&\qquad \times \v_{m+1}^{\sum_{x<i}(a_{x,m+1}+a_{xm})}[a_{im}][A-e_{im}+e_{i,m+1}],\\
\mathop{[A]\cdot E_{h+1,h}}\limits_{(h\neq m)}
=&\sum_{{i\in \tilde{I}_{k|l}}}(-1)^{(\hat{i}+\hat{h})(\sum_{x>i}\tilde{a}_{xh}+\sum_{x<i}\tilde{a}_{x,h+1})}\v_{h}^{\sum_{x>i}(a_{xh}-a_{x,h+1})}[a_{i,h+1}][A+e_{ih}-e_{i,h+1}],\\
[A]\cdot E_{m+1,m}
=&\sum_{{i\in \tilde{I}_{k|l}}}(-1)^{\sum_{(x,y)<(i,m+1)}\tilde{a}_{xy}+\hat{i}(\sum_{x>i}\tilde{a}_{xm}+\sum_{x<i}\tilde{a}_{x,m+1})}\\
&\qquad \times \v_{m}^{\sum_{x>i}(a_{xm}+a_{x,m+1})}[a_{i,m+1}][A+e_{im}-e_{i,m+1}].
\end{align*}
\end{proposition}

\subsection{Equivalence of the three constructions}
As we mentioned, the three bimodules $\mathcal{M}^{k|l}_{r|s}$, $\mathcal{S}^{k|l}_{r|s}$ and $\mathcal{V}^{k|l}_{r|s}$ all have a basis indexed by $M(k|l,r|s)$, which implies that they are isomorphic as $\mathbb{C}(\v)$-spaces. By comparing Propositions~\ref{coor}, \ref{prop:3.3} and \ref{prop:4.11}, we arrive at the following conclusion. 
\begin{theorem}
As $(U_\v(\mathfrak{gl}_{k|l}),U_\v(\mathfrak{gl}_{r|s}))$-bimodules, we have
$\mathcal{M}^{k|l}_{r|s}\simeq\mathcal{S}^{k|l}_{r|s}\simeq\mathcal{V}^{k|l}_{r|s}$.
\end{theorem}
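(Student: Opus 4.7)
The plan is to produce an explicit $\mathbb{C}(\v)$-linear isomorphism at the level of bases, then verify compatibility with the generators of $U_\v(\mathfrak{gl}_{k|l})$ and $U_\v(\mathfrak{gl}_{r|s})$ by direct term-by-term comparison of the action formulas already established. Specifically, define
\[
\Phi:\mathcal{M}^{k|l}_{r|s}\longrightarrow\mathcal{S}^{k|l}_{r|s},\quad t^{(A)}\longmapsto X^{(A)},
\qquad
\Psi:\mathcal{S}^{k|l}_{r|s}\longrightarrow\mathcal{V}^{k|l}_{r|s},\quad X^{(A)}\longmapsto [A],
\]
for each $A\in M(k|l,r|s)$, and extend $\mathbb{C}(\v)$-linearly. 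Since all three spaces carry a $\mathbb{C}(\v)$-basis indexed by the same set $M(k|l,r|s)$, both $\Phi$ and $\Psi$ are manifestly $\mathbb{C}(\v)$-linear isomorphisms, and they are homogeneous of the same parity (each $t^{(A)}$, $X^{(A)}$ and $[A]$ carries the parity $\sum(\hat{x}+\hat{y})a_{xy}\ \mathrm{mod}\ 2$).

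To upgrade $\Phi$ and $\Psi$ to bimodule isomorphisms, I would check that each of the generators $K_a^{\pm1},\ E_{h,h+1},\ E_{h+1,h}$ of $U_\v(\mathfrak{gl}_{k|l})$ acts identically on the left across the three realizations, and similarly for $U_\v(\mathfrak{gl}_{r|s})$ on the right. The $K_a^{\pm1}$-action is diagonal: in all three cases its eigenvalue on a basis vector labelled by $A$ is read off from the appropriate row or column sum of $A$, so the check is immediate. For the Chevalley generators, the statements to compare are exactly Proposition~\ref{coor} (for $\mathcal{M}^{k|l}_{r|s}$), Proposition~\ref{prop:3.3} (for $\mathcal{S}^{k|l}_{r|s}$), and Proposition~\ref{prop:4.11} (for $\mathcal{V}^{k|l}_{r|s}$). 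In each case the right-hand side is a sum over $j\in\tilde{I}_{s|r}$ (for the left action) or $i\in\tilde{I}_{k|l}$ (for the right action) of a term of the form
\[
(\text{sign})\cdot \v^{(\text{exponent})}\cdot [a_{??}]\cdot(\text{basis vector at }A\pm e_{??}\mp e_{??}),
\]
and a direct inspection shows that the exponents of $\v$ and the quantum integers coincide identically in all four non-special ($h\ne m$) cases and all four special ($h=m$) cases.

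The only nontrivial bookkeeping is matching the sign factors in the $h=m$ formulas, where the parity exponents in Proposition~\ref{coor} are written as sums $\sum_{(x,y)<(m+1,j)}(\hat{x}+\hat{y})a_{xy}$, while Propositions~\ref{prop:3.3} and~\ref{prop:4.11} use $\sum_{(x,y)>(m+1,j)}(\hat{x}+\hat{y})a_{xy}$. I would resolve this by noting that the total parity $\sum_{(x,y)}(\hat{x}+\hat{y})a_{xy}\pmod 2$ is a bihomogeneous invariant of the module component containing $t^{(A)}$, $X^{(A)}$, $[A]$, so the two sign conventions differ only by an overall scalar that can be absorbed into a rescaling of $\Phi$ and $\Psi$ on each bihomogeneous component (equivalently, one checks that the complementary sum is preserved by the elementary matrix move $A\mapsto A+e_{mj}-e_{m+1,j}$, so the two conventions yield the same action on the module). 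The main obstacle, therefore, is simply keeping track of these sign conventions consistently; once that is settled, the three sets of action formulas coincide verbatim, and the theorem follows immediately from the fact that $\Phi$ and $\Psi$ are $\mathbb{C}(\v)$-linear bijections intertwining the generators of both Hopf superalgebras.
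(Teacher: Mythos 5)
Your overall strategy is exactly the paper's: the three spaces have bases indexed by the common set $M(k|l,r|s)$, one defines $t^{(A)}\mapsto X^{(A)}\mapsto [A]$, and one checks that the generator actions in Propositions~\ref{coor}, \ref{prop:3.3} and \ref{prop:4.11} agree term by term. So the route is the same; the issue is with the one place where you admit the formulas do not literally coincide, namely the $h=m$ sign factors, and your proposed reconciliation there does not work.

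You claim the two conventions $(-1)^{\sum_{(x,y)<(m+1,j)}(\hat{x}+\hat{y})a_{xy}}$ and $(-1)^{\sum_{(x,y)>(m+1,j)}(\hat{x}+\hat{y})a_{xy}}$ ``differ only by an overall scalar'' because the total parity $\hat{A}=\sum_{(x,y)}(\hat{x}+\hat{y})a_{xy}$ is ``a bihomogeneous invariant of the module component.'' It is not: $E_{m,m+1}$ and $E_{m+1,m}$ are \emph{odd} generators, and the move $A\mapsto A+e_{mj}-e_{m+1,j}$ changes $\hat{A}$ by $\widehat{m}-\widehat{m+1}=1$, so parity is not constant on the submodule generated by a basis vector. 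Moreover the ratio of the two sign conventions on the $j$-th summand is $(-1)^{\hat{A}+(\widehat{m+1}+\hat{j})a_{m+1,j}}$, which depends on $j$ and on the entry $a_{m+1,j}$, not only on $A$; so it is not a single scalar per component and cannot be absorbed by rescaling basis vectors componentwise (at best one would have to solve a nontrivial cocycle condition simultaneously for all generators, which you have not done). The actual resolution is different: the ``$<$'' in the displayed statement of Proposition~\ref{coor} for $E_{m,m+1}\cdot t^{(A)}$ is a typo --- the proof of that proposition itself concludes with $(-1)^{\sum_{(x,y)>(m+1,j)}(\hat{x}+\hat{y})a_{xy}}$, matching Propositions~\ref{prop:3.3} and \ref{prop:4.11} verbatim, so no reconciliation is needed. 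As written, though, your argument replaces a verbatim identity of coefficients by an incorrect invariance claim, and that step would fail if the signs genuinely differed in the way you describe. A complete proof should either recompute the $h=m$ coefficients to confirm they agree on the nose, or explicitly exhibit a rescaling $t^{(A)}\mapsto (-1)^{f(A)}t^{(A)}$ and verify the required coboundary identity for every generator; neither is done here.
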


Therefore, we can rewrite Theorem~\ref{Howe} as follows. 
\begin{corollary}
The three $(U_\v(\mathfrak{gl}_{k|l}),U_\v(\mathfrak{gl}_{r|s}))$-bimodules $\mathcal{M}^{k|l}_{r|s}$, $\mathcal{S}^{k|l}_{r|s}$ and $\mathcal{V}^{k|l}_{r|s}$ all afford a double centralizer property between $U_\v(\mathfrak{gl}_{k|l})$ and $U_\v(\mathfrak{gl}_{r|s})$, and hence have a multiplicity-free decomposition :
\begin{equation*}
\mathcal{M}^{k|l}_{r|s}\simeq\mathcal{S}^{k|l}_{r|s}\simeq\mathcal{V}^{k|l}_{r|s}\simeq\bigoplus_{\lambda\in\tilde{\Lambda}(k|l)\cup\tilde{\Lambda}(r|s)}L^{k|l}_{\lambda}\otimes \widetilde{L}^{r|s}_{\lambda}.
\end{equation*}
\end{corollary}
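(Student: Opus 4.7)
The plan is to construct the desired bimodule isomorphisms explicitly on the natural bases and then verify intertwining by directly comparing the action formulas already listed in the preceding propositions. Concretely, define $\mathbb{C}(\v)$-linear maps
\[
\phi\colon \mathcal{M}^{k|l}_{r|s}\longrightarrow \mathcal{S}^{k|l}_{r|s},\qquad
\psi\colon \mathcal{S}^{k|l}_{r|s}\longrightarrow \mathcal{V}^{k|l}_{r|s},
\]
by $\phi(t^{(A)})=X^{(A)}$ and $\psi(X^{(A)})=[A]$ for every $A\in M(k|l,r|s)$. Since the three spaces share $\mathbb{C}(\v)$-bases indexed by the common set $M(k|l,r|s)$, the maps $\phi$, $\psi$, and their composite are $\mathbb{C}(\v)$-linear isomorphisms by construction. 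All that remains is to promote them to $(U_\v(\mathfrak{gl}_{k|l}),U_\v(\mathfrak{gl}_{r|s}))$-bimodule maps.

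Because $U_\v(\mathfrak{gl}_{k|l})$ and $U_\v(\mathfrak{gl}_{r|s})$ are generated as superalgebras by the Cartan generators $K_a^{\pm1}$ together with the simple Chevalley generators $E_{h,h+1}$ and $E_{h+1,h}$, it is enough to verify compatibility of $\phi$ and $\psi$ with the actions of these generators. For $K_a^{\pm1}$ the action is diagonal on each of the three bases, and a direct inspection shows that $t^{(A)}$, $X^{(A)}$, $[A]$ all carry the same weight determined by the row and column sums of $A$; hence the Cartan-compatibility is automatic.

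For the Chevalley generators the left $U_\v(\mathfrak{gl}_{k|l})$-action is provided by Proposition~\ref{coor} (on $\mathcal{M}^{k|l}_{r|s}$), by Proposition~\ref{prop:3.3}(1) (on $\mathcal{S}^{k|l}_{r|s}$) and by Proposition~\ref{prop:4.11}(1) (on $\mathcal{V}^{k|l}_{r|s}$), and similarly for the right $U_\v(\mathfrak{gl}_{r|s})$-action. In every case the image of a basis vector is the \emph{same} sum indexed by $j\in\widetilde{I}_{s|r}$ (resp.\ $i\in\widetilde{I}_{k|l}$); the resulting matrix in each summand is $A+e_{hj}-e_{h+1,j}$ or $A\pm e_{ih}\mp e_{i,h+1}$, and the scalar coefficient in each summand is the product of the same $\v$-monomial, the same quantum integer $[a_{\bullet\bullet}]$, and the same sign factor. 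Matching the summands term by term shows that $\phi$ and $\psi$ intertwine the generator actions and hence extend to bimodule isomorphisms.

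The delicate point, and the only one that demands care, is the sign tracking at the distinguished even/odd boundary, i.e.\ for the odd generators $E_{m,m+1}$ and $E_{m+1,m}$. In all three constructions the sign is produced by the same mechanism—commuting an odd operator past the already arranged factors of a monomial in the fixed total order ``$(x,y)<(k,l)$ iff $y<l$, or $y=l$ and $x<k$''—and the resulting Koszul exponent $\sum_{(x,y)\gtrless(\cdot,\cdot)}(\hat x+\hat y)a_{xy}$ appears identically in Propositions~\ref{coor}, \ref{prop:3.3}, and~\ref{prop:4.11}. The main obstacle in the write-up, therefore, is simply to confirm that the ordering conventions and the block truncations defining $M(k|l,r|s)$ agree across the three settings; once that is done, the coincidence of all sign, scalar and quantum-integer factors is a direct check and the theorem follows.
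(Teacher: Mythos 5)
Your construction of the isomorphisms follows exactly the paper's route: both identify the three spaces via their common $\mathbb{C}(\v)$-basis indexed by $M(k|l,r|s)$ ($t^{(A)}\mapsto X^{(A)}\mapsto [A]$) and verify the bimodule property by matching, generator by generator, the explicit action formulas of Propositions~\ref{coor}, \ref{prop:3.3} and \ref{prop:4.11}. That part is sound, and you correctly isolate the only delicate point, namely the Koszul sign exponents attached to the odd generators $E_{m,m+1}$, $E_{m+1,m}$.

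However, what you have actually proved is only the isomorphism statement, i.e.\ the Theorem immediately preceding this Corollary. The Corollary asserts in addition that all three modules afford a double centralizer property and hence decompose multiplicity-freely, and this does \emph{not} follow from the isomorphisms alone: one must know that at least one of the three modules genuinely has the double centralizer property. That input is Theorem~\ref{Howe} (Zhang's Howe duality for $\mathcal{M}^{k|l}_{r|s}$), which the paper then transports to $\mathcal{S}^{k|l}_{r|s}$ and $\mathcal{V}^{k|l}_{r|s}$ along the isomorphisms you construct. Your write-up never invokes this, so as it stands the stated conclusion is not reached. To close the gap, append the appeal to Theorem~\ref{Howe} together with the (routine) remark that a double centralizer property, and the resulting decomposition $\bigoplus_{\lambda}L^{k|l}_{\lambda}\otimes \widetilde{L}^{r|s}_{\lambda}$, are preserved under $(U_\v(\mathfrak{gl}_{k|l}),U_\v(\mathfrak{gl}_{r|s}))$-bimodule isomorphism.
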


\vspace{1cm}

\noindent{\bf Disclosure Statement.} There are no relevant financial or non-financial competing interests to report.


\end{document}